\documentclass[12pt]{amsart}
\usepackage{geometry} 
\geometry{a4paper} 
\usepackage[all]{xypic}


\newtheorem{thm}{Theorem}[section]
\newtheorem{cor}[thm]{Corollary}
\newtheorem{lem}[thm]{Lemma}
\newtheorem{prop}[thm]{Proposition}
\theoremstyle{definition}
\newtheorem{defn}[thm]{Definition}
\theoremstyle{remark}
\newtheorem{rem}[thm]{Remark}
\numberwithin{equation}{section}


\newcommand{\C}{\mathbb{C}} 
\newcommand{\R}{\mathbb{R}}

\DeclareMathOperator{\Sym}{Sym}
\DeclareMathOperator{\Herm}{Herm}
\DeclareMathOperator{\ci}{\sqrt{-1}}



\newcommand{\subsoln}{\underline{\mathcal{S}}}





\newcommand{\abs}[1]{|#1|}
\newcommand{\norm}[1]{|| #1||}
\newcommand{\dist}[1]{\mathrm{dist}(#1)}
\newcommand{\diam}[1]{\mathrm{diam}(#1)}
\DeclareMathOperator*{\osc}{osc}




\title{A Viscosity Approach to the Dirichlet Problem for Complex Monge-Amp\`ere Equations}
\author{Yu Wang }

\begin{document}

\begin{abstract}
The Dirichlet problem for complex Monge-Amp\'ere equations with continuous data 
is considered. In particular,
a notion of viscosity solutions is introduced; a comparison principle and a solvability theorem
are proved; the equivalence between viscosity and pluripotential solutions
is established; and an ABP-type of $L^{\infty}$-estimate is achieved.
\end{abstract}

\maketitle

\setcounter{section}{0}
\section{Introduction}

Viscosity methods provide a powerful tool for the study of non-linear partial
differential equations (see e.g. \cite{CIL, IL, CC} and references therein).
However, viscosity methods have been developed largely so far for
real equations, where no complex structure plays a particular role.
It is only relatively recently that the incorporation of
complex structures, such as plurisubharmonicity
in the case of the complex Monge-Amp\`ere equation,
has been considered by Harvey and Lawson \cite{HL} and particularly
by Eyssidieux, Guedj, and Zeriahi \cite{EGZ2}.

\smallskip
In the complex case, even the notion of viscosity solutions, especially
viscosity supersolutions, presents some subtleties. A detailed discussion
of this can be found in \cite{EGZ2}, who solved this problem
in the case of the complex Monge-Amp\`ere equation $(\omega_0+{i\over 2}\partial\bar\partial u)^n
=e^{u}\phi(z)\omega^n$, on a compact K\"ahler manifold $(X,\omega)$
with $\phi>0$  and $\omega_0\geq 0$.
Their approach is partly motivated by the desire
to apply existing results on generalized
solutions \cite{IL, CIL, BEGZ}, and by the specific objective
of establishing the continuity of the potentials for the singular K\"ahler-Einstein
metrics which they constructed earlier in \cite{EGZ1}. 
In \cite{HL}, Monge-Amp\`ere equations with constant right hand sides were considered.
But to the best of our knowledge,
a general and systematic viscosity treatment of general complex Monge-Amp\`ere equations
is not yet available in the literature.

\smallskip
The main goal of this paper is to develop such a treatment.
Specifically we consider the Dirichlet problem
\begin{equation}
\label{intro DP}
\begin{cases}
M_{\C} (u):=\det( 2 u_{\bar{k} i}  ) =  f(z, u)  & \text{in} \;\Omega \\
u  = g  & \text{on} \; \partial \Omega \\
u \in PSH (\Omega) \cap C(\overline{\Omega})
\end{cases}
\end{equation}
where $\Omega$ is a strictly pseudoconvex bounded domain in $\C^n$,
$PSH(\Omega)$ denotes the space of plurisubharmonic functions on $\Omega$,
and $f , g$ are given continuous functions with $f \geq 0$,
$f(z,u)$ non-decreasing in $u$. 
We note that $f$ is not required to be strictly increasing, so our discussion
includes automatically the case $f(z,u)=\phi(z)$.

\medskip
We formulate a notion of viscosity solutions for such equations (see Definition \ref{defn viscosity}
below). A new feature in our viscosity formulation is the decomposition of the space of testing functions. 
This is important for the invariance properties of the equations, and does not seem
to have been considered before.
Our definition is somewhat similar to that of \cite{Gu} and \cite{IL} for the real Monge-Amp\`ere
equation. 
However, the treatment of real Monge-Amp\`ere equations in \cite{IL} makes use of the local Lipschitz property of convex functions, and hence cannot be directly carried over.

\smallskip
A key ingredient for our approach is the following comparison theorem,
which allows a right-hand side $f(z,u)$ depending on $u$:

\begin{thm}
\label{intro CP}
Let $\Omega$ be a bounded domain in $\C^n$. Let $f \in C(\Omega \times \R)$ be non-negative and for any fixed $z\in \Omega$, $f(z, \cdot )$ is non-decreasing; Assume that $u,v \in C(\overline{\Omega}) \cap PSH (\Omega)$ are viscosity subsolution and supersolution of the equation $M_{\C} (\cdot) = f(z, \cdot)$ respectively. Then
\[
u \leq v \; \text{on} \;\partial \Omega \; \Rightarrow  \; u \leq v  \; \text{in} \; \Omega. 
\]
\end{thm}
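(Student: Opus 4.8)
The plan is to run the standard viscosity comparison machinery, but adapted to the plurisubharmonic setting, where the main difficulty is that test functions need only be controlled in complex (not all real) directions. First I would argue by contradiction: suppose $\sup_{\Omega}(u-v) = m > 0$. Since $u \leq v$ on $\partial\Omega$ and both functions are continuous on $\overline{\Omega}$, the supremum is attained at an interior point, and $u - v < m/2$ near $\partial\Omega$. Before doubling variables I would regularize: replace $v$ by $v_\varepsilon = v + \varepsilon|z|^2$ or, better, note that because $f(z,\cdot)$ is only non-decreasing (not strictly increasing), I cannot gain strict monotonicity for free; instead I would perturb the \emph{subsolution}, shrinking $u$ slightly toward $v$ near the boundary and exploiting strict pseudoconvexity of $\Omega$ to build a strictly plurisubharmonic barrier, so that one may assume $u$ is a strict subsolution, i.e. $\det(2u_{\bar k i}) \geq f(z,u) + \delta$ in the viscosity sense, at the cost of an arbitrarily small error.

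Next comes the doubling-of-variables step: for $\alpha > 0$ large, consider
\[
\Phi_\alpha(z,w) = u(z) - v(w) - \tfrac{\alpha}{2}|z-w|^2
\]
on $\overline{\Omega}\times\overline{\Omega}$, let $(z_\alpha, w_\alpha)$ be a maximum point, and recall the standard facts: $\alpha|z_\alpha - w_\alpha|^2 \to 0$, any limit point $\hat z$ of $z_\alpha$ (equivalently of $w_\alpha$) maximizes $u-v$, hence lies in the interior, so for $\alpha$ large both $z_\alpha, w_\alpha \in \Omega$. The decisive input is the Theorem on Sums (the Crandall--Ishii lemma, available from \cite{CIL, IL}): it produces Hermitian (here, real symmetric $2n\times 2n$, then restricted to the complex directions) matrices $X, Y$ with $X \leq Y$ such that $(p_\alpha, X)$ is in the closure of the second-order superjet of $u$ at $z_\alpha$ and $(p_\alpha, Y)$ is in the subjet of $v$ at $w_\alpha$, where $p_\alpha = \alpha(z_\alpha - w_\alpha)$. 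Here I must be careful with the paper's decomposition of test functions: I would apply the Theorem on Sums in the real variables and then observe that the complex Hessians satisfy the corresponding inequality $(X)_{\bar k i} \leq (Y)_{\bar k i}$ as Hermitian forms, and that the superjet/subjet membership in the paper's viscosity sense follows from the general real jets (this is exactly where the new feature of the definition must be checked to be compatible).

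Finally I would derive the contradiction from the equations. Using that $u$ is a strict subsolution, at $z_\alpha$ one gets $\det(2X_{\bar k i}) \geq f(z_\alpha, u(z_\alpha)) + \delta$ (with the usual caveat that if $X_{\bar k i}$ is not positive semidefinite the subsolution inequality is interpreted so that this still holds, using plurisubharmonicity to rule out the degenerate testing directions). Using that $v$ is a supersolution, at $w_\alpha$ one gets $\det(2Y_{\bar k i}) \leq f(w_\alpha, v(w_\alpha))$, with $Y_{\bar k i} \geq 0$. Since $0 \leq X_{\bar k i} \leq Y_{\bar k i}$ (the positivity of $X_{\bar k i}$ being forced once $Y_{\bar k i} \geq 0$ and $X \leq Y$), monotonicity of $\det$ on positive Hermitian matrices gives $\det(2X_{\bar k i}) \leq \det(2Y_{\bar k i})$. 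Combining,
\[
f(z_\alpha, u(z_\alpha)) + \delta \;\leq\; \det(2X_{\bar k i}) \;\leq\; \det(2Y_{\bar k i}) \;\leq\; f(w_\alpha, v(w_\alpha)).
\]
Now let $\alpha \to \infty$: $z_\alpha, w_\alpha \to \hat z$, and $u(\hat z) - v(\hat z) = m > 0$ so $u(z_\alpha) \geq v(w_\alpha)$ for large $\alpha$ up to $o(1)$; by continuity of $f$ and monotonicity in the last slot, $f(w_\alpha, v(w_\alpha)) \leq f(z_\alpha, u(z_\alpha)) + o(1)$, which contradicts the $+\delta$. The main obstacle, as flagged, is the bookkeeping around the paper's decomposed notion of viscosity solution: ensuring that the jets produced by the real Theorem on Sums legitimately test $u$ and $v$ in that refined sense, and handling the case where the complex Hessian of the lower test function fails to be positive semidefinite, which is the genuinely complex-analytic point that distinguishes this argument from the real Monge--Amp\`ere case of \cite{IL}.
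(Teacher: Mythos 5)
Your route is genuinely different from the paper's. You run the classical doubling-of-variables argument with the Theorem on Sums of \cite{CIL}, compensating for the fact that $f(z,\cdot)$ is only non-decreasing by first upgrading $u$ to a strict subsolution. The paper instead regularizes $u$ and $v$ by Jensen sup/inf convolutions (Proposition \ref{Jensen}), applies the ABP/contact-set argument (Corollary \ref{ABP gradient}) to the convex envelope of $w_\epsilon=v_\epsilon-u^\epsilon$ to produce a contact point where $\det_{\R}^{1/2n}D^2\Gamma_{w_\epsilon}\geq a/3d^2$, and transfers this quantitative positivity to the complex equation via Lemma \ref{RC inequality} and Minkowski's determinant inequality. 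The paper's margin is thus produced geometrically, with no strictness perturbation and no Crandall--Ishii machinery, and the same toolkit is reused later for Theorem \ref{C0e ABP}; your version imports the heavy lemma of \cite{CIL} but is shorter at the final step once the jet bookkeeping is done. Both handle $f\geq 0$ merely non-decreasing, which is the point the paper emphasizes.

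Three repairs are needed before your sketch is a proof. First, strict pseudoconvexity of $\Omega$ is not a hypothesis of Theorem \ref{intro CP} (only boundedness), so the barrier you invoke is not available; it is also unnecessary: replace $u$ by $u_\epsilon=u+\epsilon(|z|^2-R^2)$ with $\Omega\subset B_R$. Since $u$ is psh, Proposition \ref{test condition}(b) allows arbitrary $C^2$ test functions, and Minkowski's inequality gives $M_{\C}^{1/n}(u_\epsilon)\geq f^{1/n}(z,u_\epsilon)+2\epsilon$ in the viscosity sense (the margin naturally appears in the $1/n$-th root form, which is what survives the limit), while $u_\epsilon\leq u\leq v$ on $\partial\Omega$; let $\epsilon\to 0$ at the end. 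Second, your parenthetical deduction of $X_{\bar k i}\geq 0$ from $X\leq Y$ and $Y_{\bar k i}\geq 0$ is backwards: that implication is false, and there is no independent source of $Y_{\bar k i}\geq 0$, since subjets of a psh function carry no sign (test $v\equiv 0$ against $-|z-w_\alpha|^2$). The correct order is: plurisubharmonicity of $u$ forces $X_{\bar k i}\geq 0$ (apply the sub-mean-value inequality on complex lines to the superjet inequality), and then $X\leq Y$ yields $0\leq X_{\bar k i}\leq Y_{\bar k i}$. Third, the compatibility with the paper's restricted test class, which you only flag, must be carried out: on the subsolution side the superjet quadratic plus $\delta'|z|^2$ decomposes via Lemma \ref{decompose} with $T$-part $X_{\bar k i}+\delta' I>0$, hence in $T^{+}(n)$; on the supersolution side, if $Y_{\bar k i}>0$ one tests with the quadratic of real Hessian $Y-\delta' I$, whose $T$-part $Y_{\bar k i}-\delta' I$ is admissible for small $\delta'$, and lets $\delta'\to 0$, while if $Y_{\bar k i}$ is singular no admissible test function is produced but then $\det(2Y_{\bar k i})=0\leq f$ holds trivially, so the chain of inequalities is unaffected; closures of jets are handled by continuity of $f$. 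With these corrections your argument does go through, so the slips are local rather than conceptual, but as written the strict-pseudoconvexity step uses a hypothesis the theorem does not grant and the positivity step is logically inverted.
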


\smallskip

The main idea in the proof of Theorem \ref{intro CP}
originates from the work of Caffarelli and Cabre \cite{CC}, and we make use
of convex envelope and contact set techniques.
Although they are known in the study of many non-divergence equations, including the real Monge-Amp\`ere equations \cite{C1}, \cite{C2}, these techniques do not appear to have
been  applied to the complex Monge-Amp\`ere equations.
An immediate advantage of these techniques is that we can give a unified approach to comparison principles without treating separately the cases where $f(z,u)$ has zeros or $f(z,u)$ does not increase in $u$ strictly.
We also exploit a simple but useful inequality between real and complex Monge-Amp\`ere equations. 
The use of this inequality goes back to Cheng and Yau (unpublished), and subsequent works of Bedford \cite{BT1}, Cegrell and Persson \cite{CL}, and Blocki \cite{Bl}. Combining with viscosity techniques, we can give a more transparent version
of the proof by Cheng and Yau (see p.75 \cite{BT1}) and obtain the following ABP-type of $L^{\infty}$-estimate, which improves the $L^2$-stability in \cite{CL}: 

\begin{thm}
\label{C0e ABP}
Let $\Omega$ be a domain such that $\Omega \subset \subset B_r \subset B_{2r}$ and $f \in C(\overline{\Omega})$ non-negative. Let $u \in C(\overline{\Omega})$ be a viscosity supersolution of the equation $M_{\C} (u) = f$ in $\Omega$ and $u \geq 0$ on $\partial \Omega$. Then
\begin{equation}
\label{C0 estimate}
\sup_{\Omega} u^{-}  \leq C(n) r  \norm{f \cdot \chi_{\{ u =\Gamma_u\}}}_{L^2(\Omega)}^{1/n},
\end{equation}
where $C$ only depends on $n$.
\end{thm}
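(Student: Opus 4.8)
\medskip
\noindent\textbf{Proof proposal.} The plan is to run a complex analogue of the Alexandrov--Bakelman--Pucci argument, replacing the \emph{real} Monge--Amp\`ere density of the convex envelope by the \emph{complex} right-hand side $f$ through a pointwise inequality between the two operators. We may assume $m:=\sup_\Omega u^-$ is positive, as otherwise there is nothing to prove. First I would extend $u$ to $\tilde u\in C(\overline{B_{2r}})$ by setting $\tilde u=\min(u,0)$ on $\overline\Omega$ and $\tilde u\equiv 0$ on $\overline{B_{2r}}\setminus\Omega$, which is legitimate since $u\ge 0$ on $\partial\Omega$, and let $\Gamma_u$ be the convex envelope of $\tilde u$ in $B_{2r}$. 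Then $\Gamma_u\le 0$, $\Gamma_u\equiv 0$ near $\partial B_{2r}$, $\inf\Gamma_u=-m$, and the portion of the contact set $\{u=\Gamma_u\}$ where $\Gamma_u<0$ lies in $\Omega\cap\{u<0\}$. Since $\Gamma_u$ attains $-m$ at an interior point whose distance to $\partial B_{2r}$ is at least $r$, the subdifferential image $\partial\Gamma_u(\{u=\Gamma_u\})$ contains a ball of radius $\sim m/r$ about the origin, and the area formula for convex functions gives
\[
c(n)\Big(\frac{m}{r}\Big)^{2n}\ \le\ \big|\partial\Gamma_u(\{u=\Gamma_u\})\big|\ \le\ \int_{\{u=\Gamma_u\}}\det D^2\Gamma_u\,dx,
\]
where $D^2\Gamma_u$ is the Alexandrov Hessian; the last step (in fact an equality) requires that the Monge--Amp\`ere mass of $\Gamma_u$ carried by the contact set be absolutely continuous, a $C^{1,1}$-type property of the envelope forced by the supersolution bound and the boundedness of $f$.

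The crux is the pointwise inequality $\det D^2\Gamma_u\le f^2$ almost everywhere on $\{u=\Gamma_u\}$. Fix a point $x_0$ of this set at which $\Gamma_u$ is twice differentiable and $\Gamma_u(x_0)<0$, so $x_0\in\Omega$. Since $\Gamma_u$ is convex, $D^2\Gamma_u(x_0)\ge 0$; because the complex Hessian is a compression of the real one, $\big((\Gamma_u)_{\bar k i}(x_0)\big)\ge 0$ as well, so the quadratic polynomial $P$ with the same $2$-jet as $\Gamma_u$ at $x_0$ is plurisubharmonic. Using that $P$ touches $u$ from below at $x_0$ — up to the $o(|x-x_0|^2)$ error that the standard perturbation removes, and for which the decomposition of test functions in Definition \ref{defn viscosity} is tailored — the supersolution property gives $\det(2(\Gamma_u)_{\bar k i}(x_0))\le f(x_0)$. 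On the other hand, the elementary linear-algebra inequality comparing the real and complex Monge--Amp\`ere operators, valid because $\Gamma_u$ is convex and an equality when $\Gamma_u$ equals $|z|^2$, reads $\det D^2\Gamma_u(x_0)\le\big(\det(2(\Gamma_u)_{\bar k i}(x_0))\big)^2$. Combining the two yields $\det D^2\Gamma_u(x_0)\le f(x_0)^2$.

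Inserting this into the displayed bound gives $c(n)(m/r)^{2n}\le\int_{\{u=\Gamma_u\}}f^2\,dx=\norm{f\cdot\chi_{\{u=\Gamma_u\}}}_{L^2(\Omega)}^2$, and taking $2n$-th roots produces \eqref{C0 estimate} with $C(n)=c(n)^{-1/2n}$. I expect two points to carry the real weight. The first, the main obstacle, is transferring the viscosity supersolution inequality to the contact points of the envelope: $\Gamma_u$ touches $u$ only to second order, and the naive $-\delta|x-x_0|^2$ perturbation that would make the touching strict destroys plurisubharmonicity of the test paraboloid, so one must go through the precise structure of the viscosity definition, carefully handling the degenerate directions of $D^2\Gamma_u$, rather than argue via Lipschitz estimates as in the real case of \cite{IL}. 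The second is the regularity of $\Gamma_u$ needed to discard any singular Monge--Amp\`ere mass on the contact set; this ought to follow from the supersolution bound and the boundedness of $f$, but demands its own argument in the complex setting.
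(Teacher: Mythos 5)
Your overall strategy --- convex envelope, the supersolution property at contact points combined with the real/complex determinant inequality of Lemma \ref{RC inequality} to get $\det_{\R} D^2\Gamma_u \le f^2$ on $\{u=\Gamma_u\}$, then an ABP-type measure estimate --- is the same as the paper's. But as written there is a genuine gap at the measure-theoretic step, and it is exactly the one you flag and postpone: the inequality $\abs{\nabla\Gamma_u(\{u=\Gamma_u\})}\le\int_{\{u=\Gamma_u\}}\det D^2\Gamma_u\,dx$ is not true for a general convex function, because the a.e.\ (Alexandrov) Hessian only records the absolutely continuous part of the Monge--Amp\`ere measure; the gradient-image measure of the contact set could a priori carry singular mass, and then your chain of inequalities breaks precisely where the estimate is produced. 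Nothing elsewhere in your argument supplies the missing absolute continuity (or $C^{1,1}$-type regularity of $\Gamma_u$), and ``ought to follow from the supersolution bound'' is precisely the statement that has to be proved. The paper sidesteps the issue altogether: Lemma \ref{eq conv env} shows that $\Gamma_u$ is a viscosity supersolution of the real equation $M_{\R}(\cdot)=f^2\chi_{\{u=\Gamma_u\}}$ in $B_{2r}$, tested against arbitrary convex quadratic polynomials that genuinely touch $\Gamma_u$ (hence $u$) from below, so no Alexandrov points or jets enter; off the contact set one uses that $\Gamma_u$ is affine along a segment. Then standard real Monge--Amp\`ere theory (Proposition 1.7.1 of \cite{Gu}) converts this viscosity inequality into the measure bound $\abs{\nabla\Gamma_u(U)}\le\int_U f^2\chi_{\{u=\Gamma_u\}}\,dx$ for every open $U$, which in particular rules out singular mass, and Alexandrov's maximum principle (Theorem 1.4.1 of \cite{Gu}) gives $\sup(-\Gamma_u)\le C(n)\,r\,\norm{f\chi_{\{u=\Gamma_u\}}}_{L^2}^{1/n}$; one concludes with $u^-\le-\Gamma_u$ and $\{u=\Gamma_u\}\subset\Omega$. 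To repair your pointwise route you would have to prove the absolute continuity claim yourself, or simply quote these two results, which is what the paper does.

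A secondary remark: the difficulty you single out as the ``main obstacle'' --- that subtracting $\delta\abs{z-x_0}^2$ may destroy plurisubharmonicity of the test paraboloid --- is not actually an obstacle. If $dd^c$ of the $2$-jet of $\Gamma_u$ at $x_0$ is degenerate, its complex Monge--Amp\`ere is zero and the desired inequality is trivial since $f\ge 0$; if it is nondegenerate, the perturbed paraboloid is an admissible test function for small $\delta$ and one lets $\delta\to0$. Alternatively, Proposition \ref{classical}(b) applies directly with $w=\Gamma_u$: it is convex, hence psh, it genuinely touches $u$ from below at every contact point (since $\Gamma_u\le -u^-\le u$ in $\Omega$), and it is $T_2$ almost everywhere by Alexandrov's theorem. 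So that step is fine; the unproved absolute continuity of the envelope's Monge--Amp\`ere mass on the contact set is the real gap.
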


\smallskip
The comparison principle (Theorem \ref{intro CP}) implies readily the following existence theorem
for viscosity solutions:

\begin{thm}
\label{intro solve}
Let $\Omega$ be a bounded domain in $\C^n$ and $g \in C(\partial \Omega)$. Assume that: i) $f  \in C(\overline{\Omega} \times \R)$ be non-negative and for any fixed $z\in \Omega$, $f(z, \cdot )$ is non-decreasing; ii) there exists a harmonic function $\overline{u}$ with boundary value $g$;. iii) there exists a viscosity subsolution $\underline{u} \in C(\overline{\Omega})$ of {\rm (\ref{intro DP})} such that $\underline{u} =g$ on $\partial \Omega$. Then there exists a unique viscosity solution of the Dirichlet problem {\rm (\ref{intro DP})}.
\end{thm}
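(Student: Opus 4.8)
The plan is to carry out Perron's method. A viscosity subsolution of $M_{\C}(\cdot)=f(z,\cdot)$ is plurisubharmonic, hence subharmonic, so by the maximum principle together with (ii) the harmonic function $\overline u$ dominates every subsolution whose boundary values are $\le g$; and (iii) makes the family of such subsolutions non-empty, containing a member with the exact boundary value $g$. Concretely, let $\mathcal{F}$ be the set of upper semicontinuous $w\in\PSH(\Omega)$ that are viscosity subsolutions of $M_{\C}(w)=f(z,w)$ in $\Omega$ and satisfy $\limsup_{z\to\zeta}w(z)\le g(\zeta)$ for every $\zeta\in\partial\Omega$. Then $\underline u\in\mathcal{F}\ne\emptyset$, every $w\in\mathcal{F}$ obeys $w\le\overline u$ in $\Omega$, and $u:=\sup_{w\in\mathcal{F}}w$ is finite with $\underline u\le u\le\overline u$, so $u$ extends continuously to $\partial\Omega$ with value $g$. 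The standard stability of viscosity subsolutions under suprema, adapted to the decomposition of test functions in Definition~\ref{defn viscosity}, shows that the upper semicontinuous envelope $u^{*}$ is again a viscosity subsolution; as the regularized supremum of a locally bounded family of plurisubharmonic functions, $u^{*}\in\PSH(\Omega)$; and $\underline u\le u\le u^{*}\le\overline u$ forces $u^{*}=g$ on $\partial\Omega$. Hence $u^{*}\in\mathcal{F}$, so $u^{*}\le u\le u^{*}$, that is, $u=u^{*}$ is upper semicontinuous and a viscosity subsolution of (\ref{intro DP}).

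The step I expect to be the main obstacle is showing that the lower semicontinuous envelope $u_{*}$ is a viscosity supersolution, via the ``bump'' construction carried out inside the plurisubharmonic class. If this failed at some $x_{0}\in\Omega$, there would be a $C^{2}$ test function $\phi$ touching $u_{*}$ from below at $x_{0}$, which we may take with $u_{*}-\phi>0$ on $\overline{B_{r}(x_{0})}\setminus\{x_{0}\}$, for which the supersolution inequality fails; since $f\ge 0$ this forces $(2\phi_{\bar k i}(x_{0}))$ to be positive definite and $\det(2\phi_{\bar k i}(x_{0}))>f(x_{0},u_{*}(x_{0}))$. Using the a priori bound $u_{*}\le\overline u$ and the continuity of $f$, a routine perturbation (first shrink $r$, then put $\psi=\phi+m$ with $0<m<\tfrac{1}{2}\min_{\partial B_{r}(x_{0})}(u_{*}-\phi)$) yields a $\psi$ that is strictly plurisubharmonic and a strict viscosity subsolution on $B_{r}(x_{0})$ and satisfies $\psi<u_{*}\le u$ on an open neighbourhood of $\partial B_{r}(x_{0})$ inside the ball (note $u_{*}-\psi$ is lower semicontinuous and positive on $\partial B_{r}(x_{0})$). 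Then $W:=\max\{u,\psi\}$ on $B_{r}(x_{0})$ and $W:=u$ on $\Omega\setminus B_{r}(x_{0})$ is upper semicontinuous, lies in $\PSH(\Omega)$ by the usual gluing property, is a viscosity subsolution in $\Omega$ ($u$ outside the ball, a maximum of viscosity subsolutions inside it, and equal to $u$ on a neighbourhood of $\partial B_{r}(x_{0})$), and equals $u$ near $\partial\Omega$, so $W\in\mathcal{F}$ and $W\le u$. But choosing $x_{j}\to x_{0}$ with $u(x_{j})\to u_{*}(x_{0})=\phi(x_{0})$, which is possible because $u_{*}$ is the lower envelope of $u$, we get $\psi(x_{j})=\phi(x_{j})+m\to\phi(x_{0})+m>\lim_{j}u(x_{j})$, so $\psi(x_{j})>u(x_{j})$ for large $j$, contradicting $W\le u$. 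The delicate points here are keeping the perturbation and the gluing within the plurisubharmonic class and compatible with Definition~\ref{defn viscosity}, and extracting the contradiction along a sequence realizing the lower envelope rather than at $x_{0}$ itself.

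It remains to conclude and to prove uniqueness. We now have a viscosity subsolution $u=u^{*}$ and a viscosity supersolution $u_{*}$ with $u_{*}\le u$ in $\Omega$ and $u_{*}=u=g$ on $\partial\Omega$. The comparison principle (Theorem~\ref{intro CP}, whose proof via convex envelopes and contact sets is local in nature and applies equally to bounded semicontinuous sub- and supersolutions) then gives $u\le u_{*}$; combined with $u_{*}\le u$ this yields $u=u^{*}=u_{*}$, so $u\in C(\overline\Omega)\cap\PSH(\Omega)$ is at once a subsolution and a supersolution, that is, a viscosity solution of (\ref{intro DP}). For uniqueness, any two viscosity solutions lie in $C(\overline\Omega)\cap\PSH(\Omega)$ and agree on $\partial\Omega$, so Theorem~\ref{intro CP} applied with the roles of sub- and supersolution interchanged forces them to coincide. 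A secondary technical point to be handled carefully is precisely this last appeal to the comparison principle for the semicontinuous envelopes $u^{*},u_{*}$ rather than for continuous functions.
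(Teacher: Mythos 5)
Your argument is correct in outline, but it follows the classical Ishii--Perron route rather than the one used in the paper. You take the supremum $u$ of all (semicontinuous) subsolutions lying below $g$, prove that the upper envelope $u^{*}$ is a subsolution by stability under suprema, prove that the lower envelope $u_{*}$ is a supersolution by the bump construction, and then invoke a comparison principle for the semicontinuous envelopes to force $u^{*}=u_{*}$ and obtain continuity; comparison is thus needed twice, for existence and for uniqueness. The paper instead avoids semicontinuous envelopes altogether: Lemma~\ref{sub replace} (the key step, a Savin-style replacement lemma built like a weighted sup-convolution) shows that every admissible subsolution can be replaced by a larger one which equals $g$ exactly on $\partial\Omega$ and has a modulus of continuity controlled only by $\underline u$, $\overline u$, $f$ and $\diam{\Omega}$; hence the Perron supremum is taken over an equicontinuous family, is continuous by Arzel\`a--Ascoli together with Proposition~\ref{conv uniform}, attains the boundary data directly, and the supersolution property follows from the same bump argument you use, applied to the continuous $u$ with no need of $u_{*}$. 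The comparison principle (Theorem~\ref{intro CP}) enters only for uniqueness, and the quantitative replacement lemma yields the modulus-of-continuity control of Corollary~\ref{mod} as a by-product, which your construction does not give. The one place where your route leans on more than the paper states is the final appeal to comparison for the envelopes: Theorem~\ref{intro CP} is formulated for $u,v\in C(\overline\Omega)\cap\PSH(\Omega)$, and while the remark following its proof covers merely semicontinuous data, your $u_{*}$ is a priori only lower semicontinuous and need not be plurisubharmonic, whereas supersolutions in Definition~\ref{defn viscosity} are psh by fiat; an inspection of the comparison proof shows pshness of the supersolution is never actually used (only the sup-convolution of the subsolution needs it), but you should say this explicitly rather than treat it as a purely ``secondary'' point, since as written you are applying the theorem outside its stated hypotheses.
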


We note that the interest in Theorem \ref{intro solve} resides in the fact that the data
is only continuous. For smooth data, the existence of smooth solutions has been established
directly by Guan \cite{GB}, using the method of continuity and building
on the works of Caffarelli, Kohn, Nirenberg, and Spruck \cite{CKNS}.

Our proof of Theorem \ref{intro solve} also differs from standard reference (\cite{CIL},\cite{IL}). We learned this argument in Prof. Ovidiu Savin's lectures at Columbia University. Besides proving the existence of the solution, this argument also shows that the modulus of continuity of solution is controlled by the given subsolution and given datum (See. Corollary \ref{mod}).

\smallskip
Finally, we relate viscosity solutions in our sense to generalized solutions
in the sense of pluripotential theory:

\begin{thm}
\label{intro eqv} 
Let $\Omega$ be a bounded domain in $\C^n$ and $u \in C(\overline{\Omega}) \cap PSH (\Omega)$. Then $M_{\C} (u ) = f(z )$ in viscosity sense if and only if $M_{\C} (u) = f(z)$ in pluripotential sense.
\end{thm}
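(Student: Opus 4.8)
The plan is to prove the two implications separately, using the approximation by smooth strictly plurisubharmonic functions on one side and the convexity/envelope techniques already developed for the comparison principle on the other.

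\medskip

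First I would treat the implication ``pluripotential $\Rightarrow$ viscosity.'' The standard fact (see Bedford--Taylor) is that a continuous plurisubharmonic $u$ with $(dd^c u)^n = f\,\beta_n$ (in the pluripotential sense, where $\beta_n$ is the normalized Lebesgue measure so that $M_\C$ matches $\det(2u_{\bar k i})$) can be written locally as a decreasing limit of smooth strictly plurisubharmonic functions $u_\varepsilon$, for instance by mollification, since mollification of a psh function is psh and decreases to it. For such smooth approximants the Monge--Amp\`ere measure is just $\det(2(u_\varepsilon)_{\bar k i})\,\beta_n$, and by the weak continuity of the Monge--Amp\`ere operator along monotone sequences of continuous psh functions, $\det(2(u_\varepsilon)_{\bar k i})\,\beta_n \to f\,\beta_n$ weakly. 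To check the viscosity subsolution property at a point $z_0$ where a $C^2$ test function $q$ touches $u$ from above, one uses that $q$ also essentially touches the $u_\varepsilon$ from above up to an error going to $0$; a Jensen--Ishii / maximum-principle argument on $u_\varepsilon - q$ then forces $\det(2q_{\bar k i})(z_0) \geq f(z_0)$ at the contact point — here one must be careful because the contact point moves with $\varepsilon$, so one passes to a subsequence. For the supersolution property one argues that if $q$ touches $u$ from below at $z_0$ and $q$ is in the relevant class of test functions (plurisubharmonic, by the decomposition of testing functions in Definition \ref{defn viscosity}), then $q \leq u$ near $z_0$ with equality at $z_0$, and one compares the pluripotential Monge--Amp\`ere mass of $q$ on small balls with that of $u$, which is $\int f\,\beta_n$; letting the balls shrink and using the upper semicontinuity of $f$ and the monotonicity of Monge--Amp\`ere mass under the ordering $q\le u$ gives $\det(2q_{\bar k i})(z_0)\le f(z_0)$.

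\medskip

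For the converse ``viscosity $\Rightarrow$ pluripotential,'' the key point is to show that a viscosity solution $u$ has $(dd^c u)^n = f\,\beta_n$ as measures. The cleanest route is to use uniqueness: let $w$ be the (Bedford--Taylor) pluripotential solution of the Dirichlet problem on a small ball $B \Subset \Omega$ with $(dd^c w)^n = f\,\beta_n$ and $w = u$ on $\partial B$ (this exists and is continuous since $f$ is continuous, by Bedford--Taylor theory, or by Theorem \ref{intro solve} combined with the first implication once we know $w$ is also a viscosity solution). By the first implication, $w$ is a viscosity solution on $B$. Now $u$ is a viscosity solution on $B$ with the same boundary data, so Theorem \ref{intro CP} applied in both directions (as subsolution $\le$ supersolution and vice versa) gives $u \equiv w$ on $B$; hence $(dd^c u)^n = f\,\beta_n$ on $B$, and since $B$ is arbitrary, on all of $\Omega$. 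One subtlety: the comparison theorem as stated is for $f(z,u)$, but here $f$ depends only on $z$, which is the monotone (in fact constant-in-$u$) case, so it applies directly; one should also double-check that the pluripotential solvability of the ball problem only needs continuous $f\ge 0$ (it does — this is classical Bedford--Taylor / Ko{\l}odziej-type solvability, or can be bootstrapped from the subsolution $\underline u \equiv \inf u$).

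\medskip

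The main obstacle I anticipate is the careful handling of the test-function decomposition in the viscosity definition when passing limits in the ``pluripotential $\Rightarrow$ viscosity'' direction: one must make sure the perturbed test functions used against the smooth approximants $u_\varepsilon$ still lie in the admissible testing class, and that the normalization constants relating $M_\C(u) = \det(2u_{\bar k i})$ to the pluripotential measure $(dd^c u)^n$ (the factor $\beta_n$) are tracked consistently throughout — a point where it is easy to be off by a dimensional constant. A secondary technical nuisance is that, in the supersolution comparison on small balls, one needs a lower bound on the Monge--Amp\`ere mass of $u$ on $B_\rho(z_0)$ in terms of $\int_{B_\rho(z_0)} f\,\beta_n$; this can be extracted from the viscosity supersolution property by comparing $u$ with explicit radial barriers, but it requires the domain to be small enough that strictly psh barriers exist, which is why localizing to balls is essential.
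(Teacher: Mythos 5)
Your second implication (viscosity $\Rightarrow$ pluripotential) is essentially the paper's own argument: solve the Dirichlet problem in the pluripotential (Bedford--Taylor) sense with boundary data $u|_{\partial B}$, upgrade that solution to a viscosity solution via the other implication, and conclude $u=\tilde u$ from the comparison principle, Theorem \ref{intro CP}; your localization to small balls is a reasonable variant (the paper works on all of $\Omega$ and therefore tacitly assumes $\Omega$ strictly pseudoconvex in that section, whereas balls only need the locality of the Monge--Amp\`ere measure). The genuine problems are in your first implication, pluripotential $\Rightarrow$ viscosity, and they occur in both halves. For the subsolution half, weak convergence of $\det\bigl(2(u_\varepsilon)_{\bar k i}\bigr)\,\beta_n$ to $f\,\beta_n$ gives you no pointwise information at the (moving) contact points: at a maximum point $z_\varepsilon$ of $u_\varepsilon-q$ the classical maximum principle only yields $\det(2q_{\bar k i})(z_\varepsilon)\ge \det\bigl(2(u_\varepsilon)_{\bar k i}\bigr)(z_\varepsilon)$, and weak convergence of measures cannot prevent the right-hand side from being tiny at those particular points, so the step ``forces $\det(2q_{\bar k i})(z_0)\ge f(z_0)$'' fails as written. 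The known repair (and the content of the EGZ argument the paper cites verbatim) is the pointwise mollification inequality $\bigl(dd^c(u*\rho_\varepsilon)\bigr)^n\ \ge\ \bigl((f^{1/n})*\rho_\varepsilon\bigr)^n\beta_n$, which follows from the concavity of $A\mapsto(\det A)^{1/n}$ on nonnegative Hermitian matrices; with it, $u_\varepsilon$ is a classical subsolution of an equation whose right-hand side converges locally uniformly to $f$, and Proposition \ref{conv uniform} finishes. Your sketch never produces such a pointwise bound.

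For the supersolution half, the ``monotonicity of Monge--Amp\`ere mass under the ordering $q\le u$'' that you invoke is not a valid tool in the form you need: ordering of functions does not order their Monge--Amp\`ere measures, and the Bedford--Taylor comparison principle requires an ordering at the boundary and compares masses only on the crossing set $\{u<v\}$; moreover the pointwise density bound at a touching point that you want is precisely the viscosity supersolution property being proved, so citing it as a known monotonicity principle is close to circular. The missing idea is the perturbation/contradiction device used in Proposition \ref{p to v}: assuming $M_{\C}(p)>f(z_0)$, replace the test function by $p+\Re(h)-\tfrac{\epsilon_0}{2}|z-z_0|^2+\delta$ with $0<\delta<\tfrac{\epsilon_0}{2}r_0^2$, so that it stays strictly above $f^{1/n}$ in Monge--Amp\`ere measure on $\overline{B_{r_0}}$, lies below $u$ on $\partial B_{r_0}$, but exceeds $u$ at $z_0$; the Bedford--Taylor comparison principle applied on $\{u<\text{perturbed test}\}$ then gives the contradiction. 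Without this strict crossing created by the quadratic perturbation there is no legitimate way to bring the pluripotential comparison principle to bear, so as it stands your first implication has a real gap, and since your second implication relies on it, the proposal does not yet close.
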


An immediate consequence of Theorems \ref{intro solve} and \ref{intro eqv} is
the following Corollary, which can be viewed as a generalization
of the classic result of Bedford and Taylor for the case when $f(z,u)$
does not depend on $u$:

\begin{cor}
\label{intro cor}
Let $\Omega$ be a strictly pseudoconvex domain and $g \in C(\partial \Omega)$. Assume that $f  \in C(\overline{\Omega} \times \R)$ is non-negative and for any fixed $z\in \Omega$, $f(z, \cdot )$ is non-decreasing. Then there exists a unique pluripotential (equivalent, viscosity) solution of the Dirichlet problem {\rm (\ref{intro DP}) }.
\end{cor}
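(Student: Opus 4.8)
The plan is to assemble the corollary directly from the three main theorems of the paper, using the classical Bedford--Taylor machinery only to verify hypotheses (ii) and (iii) of Theorem \ref{intro solve}. First I would observe that since $\Omega$ is strictly pseudoconvex and bounded, it admits a strictly plurisubharmonic defining exhaustion; concretely, after shrinking to a defining function $\rho$ with $\rho<0$ in $\Omega$, $\rho=0$ on $\partial\Omega$, and $i\partial\bar\partial\rho\geq c\,\omega$ near $\overline\Omega$, the function $A\rho$ for $A$ large is a smooth strictly plurisubharmonic function on a neighborhood of $\overline\Omega$ vanishing on the boundary. This is the standard mechanism for producing subsolutions on strictly pseudoconvex domains and is exactly what is needed below.

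Next I would verify hypothesis (ii): since $\Omega$ is strictly pseudoconvex it is in particular a bounded domain with boundary regular enough (e.g. satisfying an exterior cone / barrier condition at every boundary point) that the classical Dirichlet problem for the Laplacian is solvable, so for the given $g\in C(\partial\Omega)$ there is a harmonic $\overline u\in C(\overline\Omega)$ with $\overline u=g$ on $\partial\Omega$. For hypothesis (iii) I would construct a continuous viscosity subsolution $\underline u$ with $\underline u=g$ on $\partial\Omega$ as follows. Since $f$ is continuous on $\overline\Omega\times\R$ and $\partial\Omega$ is compact, $f$ is bounded on $\overline\Omega\times[\,-M,M\,]$ for any $M$; pick $M\geq \norm{g}_{L^\infty(\partial\Omega)}+\norm{A\rho}_{L^\infty(\Omega)}$ and let $K:=\sup\{f(z,t):z\in\overline\Omega,\ |t|\leq M\}$. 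Taking the defining function as above and choosing the constant $A$ large enough that $\det(2(A\rho)_{\bar k i})\geq K$ on $\overline\Omega$ (possible because $A\rho$ is strictly psh with Hessian growing like $A$, so its complex Monge--Amp\`ere mass grows like $A^n$), the function $w:=A\rho+h$, where $h\in C(\overline\Omega)$ is the \emph{harmonic} extension of $g$, is plurisubharmonic, continuous up to $\overline\Omega$, equals $g$ on $\partial\Omega$, and satisfies (by convexity/superadditivity of the relevant determinant inequality, or simply because $A\rho$ already provides enough Monge--Amp\`ere mass while $h$ contributes a nonnegative amount) $M_\C(w)\geq K\geq f(z,w)$ wherever smooth testing applies, hence in the viscosity sense $w$ is a subsolution with $w=g$ on $\partial\Omega$; set $\underline u:=w$. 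With (i) given by hypothesis, all three hypotheses of Theorem \ref{intro solve} hold, so there exists a unique \emph{viscosity} solution $u$ of \eqref{intro DP}.

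Finally I would pass to the pluripotential side: since the right-hand side of the equation solved is $f(z,u)$ with $u$ now a fixed continuous function, the equation $M_\C(u)=f(z,u(z))=:\tilde f(z)$ has continuous right-hand side $\tilde f\in C(\overline\Omega)$, so Theorem \ref{intro eqv} applies and $u$ is also a pluripotential solution; conversely any pluripotential solution of \eqref{intro DP} is, by the same equivalence applied with its own right-hand side, a viscosity solution, and hence coincides with $u$ by the uniqueness in Theorem \ref{intro solve} (equivalently by the comparison principle Theorem \ref{intro CP}, since two solutions agree on $\partial\Omega$). This yields existence and uniqueness of the pluripotential (equivalently, viscosity) solution. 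The only genuinely non-formal point is the subsolution construction in hypothesis (iii): one must be careful that the barrier $A\rho$ is built on a neighborhood of $\overline\Omega$ and that adding the harmonic extension $h$ does not destroy the subsolution inequality in the viscosity sense at boundary-adjacent points where $\rho$ is only Lipschitz-controlled; this is handled by noting $\rho$ is smooth and strictly psh up to the closure for a strictly pseudoconvex domain, so no regularity is lost, and the determinant inequality $\det(2(A\rho+h)_{\bar k i})\geq \det(2(A\rho)_{\bar ki})$ holds pointwise because $i\partial\bar\partial h\geq 0$ and the determinant is monotone on Hermitian positive-semidefinite matrices. I expect this subsolution step to be the main obstacle; everything else is bookkeeping with the quoted theorems.
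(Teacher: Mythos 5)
Your reduction to Theorem \ref{intro solve} and the passage between viscosity and pluripotential solutions via Theorem \ref{intro eqv} (freezing the right-hand side as $\tilde f(z)=f(z,u(z))$, and getting uniqueness from the comparison principle) are fine. The genuine gap is exactly at the point you flag as the main obstacle: the subsolution $\underline u=A\rho+h$ with $h$ the \emph{harmonic} extension of $g$ does not work, and the justification offered is false. A harmonic function on a domain in $\C^n$ ($n\ge 2$) is not plurisubharmonic in general: $\Delta h=0$ only says that the complex Hessian $dd^c h$ has zero trace, so unless $h$ happens to be pluriharmonic it has negative eigenvalues, and the inequality $\det\bigl(2(A\rho+h)_{\bar k i}\bigr)\ge\det\bigl(2(A\rho)_{\bar k i}\bigr)$ "by monotonicity of the determinant'' collapses because its hypothesis $i\partial\bar\partial h\ge 0$ fails. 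Worse, for $g$ merely continuous the harmonic extension is only smooth in the interior and $dd^c h$ is in general unbounded below near $\partial\Omega$, so no single finite $A$ makes $A\rho+h$ plurisubharmonic on all of $\Omega$, let alone gives $M_\C(A\rho+h)\ge K$. Hence hypothesis (iii) of Theorem \ref{intro solve} is not verified by your construction, and the existence claim is unproven as written.

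The paper avoids this by a two-step argument you are missing. First it treats $g$ smooth: extend $g$ to a function in $C^\infty(\overline\Omega)$ (not its harmonic extension) and take $\underline u=A\rho+g$; since $dd^c g$ is then bounded on $\overline\Omega$ and $\rho$ is a $C^2$ strictly plurisubharmonic exhaustion, choosing $A$ large makes $A\rho+g$ strictly psh with $M_\C(A\rho+g)$ as large as needed, and it has boundary value $g$. Second, for general continuous $g$ it approximates $g$ uniformly by smooth boundary data, solves each approximate problem, uses the comparison principle (Theorem \ref{intro CP}) to show the resulting solutions form a uniformly Cauchy sequence, and passes to the limit with Proposition \ref{conv uniform}. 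This approximation step cannot be dispensed with in your scheme, precisely because a continuous-data subsolution with exact boundary value $g$ is not available by a direct barrier of the form you propose; if you want to keep your structure, you must either add this approximation step or construct the subsolution differently (e.g. add $A\rho$ to a \emph{smooth} extension of smooth approximants of $g$ and again conclude by comparison plus uniform convergence).
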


\medskip

The paper is organized as follows. In \S 1, we summarize some basic linear algebra 
which is not standard. In \S
2, we introduce our notion of viscosity solutions
and establish some of their basic properties.
In \S 3, we adapt some standard tools from viscosity theory.
\S 4 is devoted to the proof of the comparison principle
and the study of the Dirichlet problem for the complex Monge-Amp\`ere
equation in strictly pseudoconvex domain. The equivalence between pluripotential solutions and viscosity solutions is established in \S 5.
We prove Theorem \ref{C0e ABP} in \S 6.

\bigskip

\textbf{Acknowledgments}: I would like to express my great gratitude to my advisor Prof. Duong Hong Phong, for his penetrating remarks, various suggestions and many encouragement. Also I am heartily grateful to Prof. Ovidiu Savin, for his valuable and inspirational discussions. I would also like to thank Prof. Zbigniew Blocki for providing me with some important references.

This paper is part of my forthcoming thesis at Columbia University.

\section{Linear Algebra Preliminaries}

Let $\R^{2n}$ be equipped with canonical coordinates ordered
as $x^1, ... ,x^n$,  $y^1, ... y^n$. 
We identify $C^n$ with $\R^{2n}$ by the relation $x^i + \ci y^i  =z^i$, and 
we let $J$ be the canonical complex structure:
\begin{equation}
\label{complex structure}
J  =  \begin{pmatrix}
  0 & -I  \\
  I & 0  
 \end{pmatrix}, 
\end{equation}
where $I$ is the $n \times n$ identity matrix. We introduce the following terminology and notations:

\medskip

$\bullet$ $D^2$ is the Hessian operator computed with respect to the ordered coordinates of $\R^{2n}$.

$\bullet$ $dd^c$ is the operator sending a smooth function $\varphi$ to the Hermitian matrices $2\varphi_{\bar{j} i}$, computed with respected to the coordinate $z^i$.

$\bullet$ $T(n)$ is the space of polynomials of the form $A_{\bar{j}i} z^{i} \bar{z^j}$, where $A_{\bar{j} i}$ is a Hermitian matrices. $T^{+} (n)$ is the non-negative cone in $T(n)$ consisting of $p = A_{\bar{j}i} z^{i} \bar{z^j}$ with $A$ being non-negative. 

$\bullet$ $H(n):= \C[n](2) \oplus \C[n](1)$, where $\C[n] (d)$ denotes space of homogeneous polynomials over $\C$ of degree $d$ with respect to $z^1, ..., z^n$.

$\bullet$ $\Herm(n)$ and $\Sym(2n)$ are the spaces of Hermitian matrices over $\C$ and symmetric matrices over $\R$ respectively. Recall that each $M \in \Sym(2n)$ that commutes with $J$ can be canonically identified as an element in $\Herm(n)$.

$\bullet$ $\det_{\C}$ and $\det_{\R}$ are the determinant functions on $\Herm(n)$ and $\Sym(2n)$ respectively. For any smooth function $\varphi$, we define 
\[
M_{\C} (\varphi) : = \text{det}_{\C} (dd^c \varphi),
\qquad
M_{\R} (\varphi) := \text{det}_{\R} (D^2 \varphi). 
\]
\medskip

The following lemmas regarding quadratic polynomials and matrices are very elementary; we shall omit the proof.
\begin{lem}
\label{decompose}
Let $\varphi $ be a quadratic polynomial in  $\R[x^1, ... ,x^n, y^1, ... y^n]$. Then under the identification $x^i + \ci y^i  =z^i$, $\varphi$ can be uniquely written as:
\[
\varphi  = p + \Re(h) ,  \quad  p \in T(n),\quad h \in H(n) .
\]
\end{lem}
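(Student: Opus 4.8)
The plan is to prove Lemma \ref{decompose} by a dimension count combined with an explicit construction, working entirely with the complex coordinates $z^i = x^i + \ci y^i$ and $\bar z^i = x^i - \ci y^i$. First I would observe that a real quadratic polynomial $\varphi$ in $x^1,\dots,x^n,y^1,\dots,y^n$ (with no linear or constant terms, which I take to be the meaning here, or else one absorbs the degree-one part into $\Re(h)$ via $\C[n](1)$ and notes the constant term is zero) can be rewritten, using $x^i = \tfrac12(z^i+\bar z^i)$ and $y^i = \tfrac{1}{2\ci}(z^i - \bar z^i)$, as a quadratic form in the $2n$ variables $z^1,\dots,z^n,\bar z^1,\dots,\bar z^n$. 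Collecting terms by bidegree, we get
\[
\varphi = \sum_{i,j} a_{ij} z^i z^j + \sum_{i,j} b_{\bar j i} z^i \bar z^j + \sum_{i,j} c_{\bar i \bar j} \bar z^i \bar z^j,
\]
where without loss of generality $a_{ij} = a_{ji}$ and $c_{\bar i\bar j} = c_{\bar j\bar i}$. The reality condition $\varphi = \bar\varphi$ forces $c_{\bar i \bar j} = \overline{a_{ij}}$ and $b_{\bar j i} = \overline{b_{\bar i j}}$, i.e. the matrix $B = (b_{\bar j i})$ is Hermitian. Hence the middle block is exactly an element $p = B_{\bar j i} z^i \bar z^j \in T(n)$, and the outer blocks combine into $\sum_{i,j} a_{ij} z^i z^j + \overline{\sum_{i,j} a_{ij} z^i z^j} = 2\Re\big(\sum a_{ij} z^i z^j\big)$, which (after absorbing the factor of $2$) is $\Re(h)$ with $h = 2\sum_{i,j} a_{ij} z^i z^j \in \C[n](2)$; if one wishes to allow degree-one terms in $\varphi$, those contribute the $\C[n](1)$ summand of $H(n)$ in the same way.

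Next I would address \emph{uniqueness}, which is the cleaner half. Suppose $p_1 + \Re(h_1) = p_2 + \Re(h_2)$; setting $p = p_1 - p_2 \in T(n)$ and $h = h_1 - h_2 \in H(n)$ we must show $p + \Re(h) = 0$ implies $p = 0$ and $h = 0$. Here the key point is that $p$ has mixed bidegree $(1,1)$ in $(z,\bar z)$ while $\Re(h) = \tfrac12(h + \bar h)$ is a sum of a pure $(2,0)$-or-$(1,0)$ part and a pure $(0,2)$-or-$(0,1)$ part; since monomials of distinct bidegree are linearly independent as functions on $\C^n$ (equivalently, as polynomials in the $2n$ independent variables $z^i, \bar z^i$), the vanishing of the sum forces the $(1,1)$-part $p$ to vanish and the holomorphic part $h$ of $\Re(h)$ to vanish (a polynomial $h$ with $h + \bar h = 0$ and $h$ holomorphic must be constant, hence zero as it has no constant term, or is purely imaginary constant which is excluded). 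This gives uniqueness of the decomposition.

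The main (and essentially only) obstacle is bookkeeping rather than mathematics: one must be careful about the normalization conventions — the factor of $2$ hidden in the notation $dd^c\varphi = 2\varphi_{\bar j i}$ and in ``$p = A_{\bar j i} z^i \bar z^j$'', whether linear terms are included, and the precise identification of $H(n) = \C[n](2) \oplus \C[n](1)$ with the holomorphic part. Since the paper explicitly says these lemmas ``are very elementary; we shall omit the proof,'' I would at most record the decomposition formula: write $\varphi$ in the $z,\bar z$ variables, read off the Hermitian $(1,1)$-block as $p$ and twice the holomorphic part as $h$, and invoke linear independence of distinct-bidegree monomials for uniqueness. No genuinely hard step arises; the entire content is that $\Sym(2n)$ (quadratic forms) decomposes as $\Herm(n) \oplus (\text{symmetric } \C\text{-bilinear forms})$ under the action of $J$, with the latter summand realized as $\C[n](2)$ via its real part.
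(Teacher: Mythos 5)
Your argument is correct, and it is exactly the elementary argument the paper has in mind when it omits the proof: rewrite $\varphi$ in the variables $z^i,\bar z^i$, use the reality of $\varphi$ to identify the $(1,1)$-block with a Hermitian form $p\in T(n)$ and the $(2,0)+(0,2)$ and degree-one parts with $\Re(h)$, $h\in H(n)=\C[n](2)\oplus\C[n](1)$, and get uniqueness from linear independence of monomials of distinct bidegree. Your parenthetical handling of the constant and linear terms (linear terms absorbed into $\C[n](1)$, constant term absent) is the right reading of the statement, so there is no gap.
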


\begin{lem}
\label{RC inequality}
Let $\varphi$ be a quadratic polynomial (or a smooth function), then $\frac{1}{2} ( D^2 \varphi + J^{t} D^2 \varphi J ) $ can be canonically identified with $dd^c \varphi$, and
\[
\mathrm{det}_{\R } ( \frac{1}{2} ( D^2 \varphi + J^{t} D^2 \varphi J )  ) = ( \mathrm{det}_{\C}  (dd^c \varphi)  )^2.
\]
Moreover if $\varphi$ is convex, then 
\[ M_{\C}^{1/n} (\varphi) \geq M_{\R}^{1/2n} (\varphi ). \]
\end{lem}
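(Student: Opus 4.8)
The plan is to reduce both assertions to pointwise statements about the Hessian matrix $D^2\varphi$ at a fixed point, so it suffices to treat the case where $\varphi$ is a real quadratic form, i.e. $D^2\varphi$ is a constant symmetric matrix $A \in \Sym(2n)$. First I would verify the identification claim: the averaged matrix $\tilde A := \tfrac{1}{2}(A + J^t A J)$ commutes with $J$ (since $J^2 = -I$, conjugating $\tilde A$ by $J$ fixes it), so by the remark recalled in \S 1 it determines an element of $\Herm(n)$; a direct computation in the coordinates $z^i = x^i + \ci y^i$ shows this Hermitian matrix is exactly $(\varphi_{\bar j i})$ up to the normalizing factor of $2$, i.e. it is $dd^c\varphi$. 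This is the ``not standard'' linear algebra the paper alludes to, and it is a routine but slightly tedious change-of-basis calculation.

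For the determinant identity, the key observation is that $\tilde A$, being $J$-invariant, is conjugate over $\R$ (or $\C$) to a block matrix built from its Hermitian avatar $H = dd^c\varphi$ together with $\bar H$: concretely, in a basis adapted to the splitting of $\C^{2n}\otimes\R$ into the $\pm\ci$-eigenspaces of $J$, the real operator $\tilde A$ becomes $\mathrm{diag}(H,\bar H)$. Hence $\det_\R \tilde A = \det_\C H \cdot \det_\C \bar H = |\det_\C H|^2 = (\det_\C(dd^c\varphi))^2$, using that $H$ is Hermitian so its determinant is real. This gives the first displayed equality.

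For the inequality, assume $\varphi$ is convex, so $A = D^2\varphi \geq 0$. The point is that replacing $A$ by its $J$-average can only increase the determinant: write $\det_\R^{1/2n}$ as a function on the cone of positive semidefinite matrices and recall it is concave there (Minkowski's determinant inequality). Since $J^t A J$ is also positive semidefinite with the same determinant as $A$ (conjugation by an orthogonal matrix), concavity gives
\[
\det{}_\R^{1/2n}\!\Big(\tfrac12(A + J^t A J)\Big) \;\geq\; \tfrac12\det{}_\R^{1/2n}(A) + \tfrac12\det{}_\R^{1/2n}(J^t A J) \;=\; \det{}_\R^{1/2n}(A).
\]
Combining this with the identity $\det_\R \tilde A = (\det_\C(dd^c\varphi))^2$ just proved, we get $(M_\C(\varphi))^{2/2n} = (\det_\C(dd^c\varphi))^{1/n} = \det_\R^{1/2n}(\tilde A) \geq \det_\R^{1/2n}(A) = M_\R^{1/2n}(\varphi)$, which is the claim.

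The only real subtlety — and the step I would be most careful about — is the sign/normalization bookkeeping in the identification $\tfrac12(A + J^tAJ) \leftrightarrow dd^c\varphi = (2\varphi_{\bar j i})$: one must make sure the factor of $2$ in the definition of $dd^c$ is consistent with the convention that $\det_\R$ of the $J$-invariant real $2n\times 2n$ matrix equals $|\det_\C|^2$ of the associated $n\times n$ Hermitian matrix, with no stray powers of $2$. Everything else is standard: Minkowski's inequality for the concavity of $\det^{1/2n}$ on positive semidefinite matrices, and invariance of the determinant under orthogonal conjugation. Since the paper explicitly defers the proof (``very elementary; we shall omit the proof''), I would in fact only sketch it at roughly this level of detail.
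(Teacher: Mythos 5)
Your argument is correct: the $J$-average $\tfrac12(A+J^tAJ)$ commutes with $J$ and is identified with $dd^c\varphi=(2\varphi_{\bar j i})$ (the factor $2$ in the paper's definition of $dd^c$ is exactly what makes this identification factor-free), the $\pm\ci$-eigenspace decomposition gives $\det_{\R}=(\det_{\C})^2$ for $J$-invariant matrices, and Minkowski's concavity of $\det^{1/2n}$ on the positive semidefinite cone together with $\det(J^tAJ)=\det A$ yields the inequality $M_{\C}^{1/n}(\varphi)\geq M_{\R}^{1/2n}(\varphi)$. The paper omits the proof of this lemma as elementary, and your sketch is precisely the standard argument (going back to Cheng--Yau and used by B{\l}ocki, Cegrell--Persson) that it has in mind, so there is nothing to correct.
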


\section{Viscosity Solutions}
We consider the following equation:
\begin{equation}
\label{Eq}
M_{\C} ( u) (z) = f(z, u (z))   \quad \forall z \in \Omega
\end{equation}
Henceforth, we shall always assume the right-hand side $f $ to be a function 
from $ \Omega \times\R$ to $[0,\infty)$ (not necessarily continuous) and non-decreasing with respect to the second variable.

\medskip

We recall the following terminology:

\begin{defn}
Let $u, \varphi$ be two continuous functions.

{\rm (a)} $\varphi$ is said to touch $u$ from above at $z$ in a open neighborhood $V$ if $\varphi \geq u$ in $V$ and $\varphi (z) =  u(z)$.

{\rm (b)} $\varphi$ is said to touch $u$ from above at $z$, if there exists some open neighborhood $V$ of $z$ such that $\varphi$ touches $u$ from above in $V$.

The notions of $\varphi$ touching $u$ from below at $x$ are defined in an analogous way.
\end{defn}

We can now define the notion of viscosity solutions of the equation $M_{\C} (\cdot ) = f(z, \cdot)$ as follows:  

\begin{defn}
\label{defn viscosity}
Let $u$ be a continuous plurisubharmonic (``psh") function on a domain $\Omega \subset \C^n$.
The function $u$ is said to be a viscosity subsolution (resp. supersolution) of $M_{\C} (\cdot)  =f (z,\cdot) $, if the following condition holds:

\smallskip

($\star$) For any $p \in  T^{+} (n)$, if there exists $h \in H(n)$ such that $  p + \Re (h) $ touches $u$ from above (resp. below) at some $z_0\in \Omega$, then $M_{\C} (p) \geq f (z_0, u (z_0))$ (resp. $M_{\C} (p) \leq f(z_0, u(z_0) )$.

\smallskip

We say that $M_{\C} (u) \geq f(z, u)$ (resp. $M_{\C} (u) \geq f (z, u)$ ) in viscosity sense if $u$ is a viscosity subsolution (resp. supersolution).
We say that
$u$ is a viscosity solution of $M_{\C} (\cdot)  = f (z,\cdot)$ if it is both a supersolution and a subsolution.
\end{defn}

\medskip

\begin{rem}

i) In the Definition \ref{defn viscosity}, $\Omega$ is not required to be strictly pseudoconvex or bounded.

ii) One can similarly define $M^{1/n}_{\C}$; and $M^{1/n}_{\C} (u) = f^{1/n}(z,u)$ (resp. $\geq, \leq$) in viscosity sense if and only if $M_{\C} (u) = f(z,u)$ (resp. $\geq, \leq$) in viscosity sense.

\end{rem}

\medskip

The following proposition is useful in exploring the generality of our definition:
\begin{prop}
\label{test condition}

{\rm (a)} $u$ is a viscosity subsolution (resp. supersolution) of $M_{\C} (\cdot)  = f (z,\cdot)$ according to Definition {\rm \ref{defn viscosity}} if and only if the following condition holds:

\smallskip

For any $\varphi \in C^2 (\Omega) \cap PSH (\Omega)$, if $u - \varphi$ takes a local maximum (resp. minimum) at $z_0 \in \Omega$, then $  M \varphi (z_0) \geq f(z_0, \varphi(z_0))$ (resp. $  M \varphi (z_0) \leq f(z_0, \varphi(z_0) )$)

\medskip

{\rm (b)} Let $u \in  PSH (\Omega) \cap C(\Omega)$. Then $u$ is a subsolution if and only if $u$ satisfies the condition in {\rm (a)} with $\varphi \in C^{2} (\Omega) \cap PSH(\Omega)$ replaced by $\varphi \in C^2 (\Omega)$
\end{prop}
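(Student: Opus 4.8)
The plan is to prove Proposition \ref{test condition} in two parts, establishing a chain of equivalences between the ``quadratic testing functions in $T^+(n)$'' formulation of Definition \ref{defn viscosity}, the ``$C^2$ psh testing functions'' formulation of part (a), and the ``arbitrary $C^2$ testing functions'' formulation of part (b). I will only write out the subsolution case in detail, since the supersolution case is entirely parallel (replacing maxima by minima and reversing inequalities).

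For part (a), the first direction is the easy one: if $u$ satisfies the $C^2$-psh condition and $p+\Re(h)$ touches $u$ from above at $z_0$ with $p\in T^+(n)$, then $\varphi:=p+\Re(h)$ is itself a $C^2$ psh function (note $p\in T^+(n)$ forces $dd^c\varphi = dd^c p \geq 0$, so $\varphi$ is psh), and $u-\varphi$ has a local maximum at $z_0$; hence $M_\C\varphi(z_0)\geq f(z_0,\varphi(z_0))=f(z_0,u(z_0))$, and $M_\C\varphi(z_0)=M_\C(p)$ since $h$ is pluriharmonic. For the converse, suppose $\varphi\in C^2(\Omega)\cap PSH(\Omega)$ and $u-\varphi$ has a local max at $z_0$. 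I would Taylor-expand $\varphi$ to second order at $z_0$: writing the quadratic part via Lemma \ref{decompose} as $P(z) = p(z) + \Re(h(z))$ where $p$ is the $dd^c$-part and $h$ the holomorphic part, the key point is that $p\in T^+(n)$ because $dd^c\varphi(z_0)\geq 0$ by plurisubharmonicity. The standard device is then to perturb: for each $\varepsilon>0$, the function $P_\varepsilon(z):=\varphi(z_0)+P(z-z_0)+\varepsilon|z-z_0|^2$ satisfies $P_\varepsilon \geq \varphi$ near $z_0$ (by the Taylor remainder estimate) hence $P_\varepsilon$ touches $u$ from above at $z_0$ after the trivial adjustment making equality hold, and $P_\varepsilon$ is of the required form $p_\varepsilon + \Re(h)$ with $p_\varepsilon\in T^+(n)$. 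Applying the hypothesis gives $M_\C(p_\varepsilon)\geq f(z_0,u(z_0))$, and letting $\varepsilon\to 0$ yields $M_\C(dd^c\varphi(z_0))\geq f(z_0,u(z_0))$ by continuity of $\det_\C$. One subtlety to handle carefully: $P_\varepsilon$ only dominates $\varphi$ on a small ball, and equality $P_\varepsilon(z_0)=u(z_0)$ holds automatically; this is the routine ``touching'' bookkeeping.

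For part (b), given $u\in PSH(\Omega)\cap C(\Omega)$, one direction is trivial since $C^2(\Omega)\cap PSH(\Omega)\subset C^2(\Omega)$. For the nontrivial direction, suppose $u$ is a subsolution (in the sense of (a), equivalently Definition \ref{defn viscosity}) and $\varphi\in C^2(\Omega)$ is an \emph{arbitrary} $C^2$ function with $u-\varphi$ attaining a local maximum at $z_0$. The issue is that $\varphi$ need not be psh, so we cannot test directly. Here I would use plurisubharmonicity of $u$: since $u$ is psh and $u-\varphi$ has a local max at $z_0$, I claim $dd^c\varphi(z_0)\geq 0$ automatically. Indeed, for any complex line $\ell$ through $z_0$, $u|_\ell$ is subharmonic while $\varphi|_\ell - (u|_\ell)$ has a local min at $z_0$ along $\ell$; a subharmonic function cannot be touched from below at an interior point by a $C^2$ function with negative Laplacian, forcing $\Delta(\varphi|_\ell)(z_0)\geq 0$, i.e. the restriction of $dd^c\varphi(z_0)$ to every line is $\geq 0$, so $dd^c\varphi(z_0)\geq 0$. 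Once we know $dd^c\varphi(z_0)\geq 0$, we replace $\varphi$ by $\tilde\varphi(z):=\varphi(z)+\delta|z-z_0|^2$ for small $\delta>0$ — wait, this would destroy the local max; instead the correct move is to note that the second-order Taylor polynomial $P$ of $\varphi$ at $z_0$ already has $dd^c P = dd^c\varphi(z_0)\geq 0$, decompose $P = p+\Re(h)$ with $p\in T^+(n)$ via Lemma \ref{decompose}, and run the same $\varepsilon$-perturbation argument as in (a): $P_\varepsilon = \varphi(z_0) + P(z-z_0)+\varepsilon|z-z_0|^2$ touches $u$ from above at $z_0$ and is of the admissible form, so $M_\C(p_\varepsilon)\geq f(z_0,u(z_0))$, and $\varepsilon\to 0$ gives $M_\C(dd^c\varphi(z_0))\geq f(z_0,u(z_0)) = f(z_0,\varphi(z_0))$.

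The main obstacle is the lemma buried in part (b): the assertion that when $u$ is psh and $u-\varphi$ has a local maximum at $z_0$, the complex Hessian $dd^c\varphi(z_0)$ is automatically nonnegative. This is what makes the psh hypothesis on $u$ do real work and lets us dispense with the psh hypothesis on the test function. The proof is the one-complex-variable maximum-principle argument sketched above (restrict to each complex line, use that subharmonic functions satisfy the sub-mean-value property so cannot have a ``strict interior touch from below'' by a concave-on-that-line quadratic), but it must be stated cleanly because it is the conceptual heart of the proposition. Everything else — the Taylor expansion, the $\varepsilon$-regularization, the continuity of $\det_\C$, the pluriharmonicity of $h\in H(n)$ so that $dd^c(p+\Re h) = dd^c p$ — is standard viscosity-solution bookkeeping that I would carry out without much comment.
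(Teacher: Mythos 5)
The subsolution half of (a) and all of (b) are correct, and they follow essentially the route the paper intends: the paper disposes of (a) by saying one mimics Proposition 2.4 of \cite{CC} (your Taylor-polynomial-plus-$\varepsilon|z-z_0|^2$ argument is exactly that), and its entire proof of (b) is the one-line assertion that $u\in PSH$ and $u-\varphi$ having a local maximum at $z_0$ force $dd^c\varphi(z_0)\geq 0$ --- the fact you correctly identify as the heart of the matter and actually prove, via restriction to complex lines and the sub-mean-value inequality. Two small blemishes there: the one-variable fact is misstated with the wrong orientation (the subharmonic function $u|_{\ell}$ is touched from \emph{above} by $\varphi|_{\ell}+c$; being touched from \emph{below} by a function with negative Laplacian is perfectly possible, so the statement as you phrase it is false, though your conclusion $\Delta(\varphi|_{\ell})(z_0)\geq 0$ and the mechanism are right); and the final equality $f(z_0,u(z_0))=f(z_0,\varphi(z_0))$ is not automatic at a local maximum of $u-\varphi$ --- one should first normalize $\varphi$ by the constant $u(z_0)-\varphi(z_0)$, which changes neither $M_{\C}\varphi$ nor the local-maximum property.

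The genuine gap is your dismissal of the supersolution half of (a) as ``entirely parallel.'' It is not: to test a supersolution you must produce a polynomial touching $u$ from \emph{below}, so the Taylor polynomial has to be perturbed by $-\varepsilon|z-z_0|^2$, and this downward perturbation can leave $T^{+}(n)$ whenever $dd^c\varphi(z_0)$ is degenerate --- in that case the perturbed quadratic is simply not an admissible test object in Definition \ref{defn viscosity}, and your argument stalls. This is precisely what the two preliminary observations in the paper's proof are for: if $M_{\C}(\varphi)(z_0)>0$ then the nonnegative Hermitian matrix $dd^c\varphi(z_0)$ is in fact positive definite, so $dd^c\varphi(z_0)-2\varepsilon I\geq 0$ for $\varepsilon$ small and the perturbed polynomial lies in $T^{+}(n)$; while if $M_{\C}(\varphi)(z_0)=0$ the desired inequality $M_{\C}\varphi(z_0)\leq f(z_0,\cdot)$ holds trivially because $f\geq 0$, and no test function is needed. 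Your write-up needs this case split (or an equivalent device) to cover supersolutions; the asymmetry it reflects is exactly why adding $\varepsilon|z-z_0|^2$ is harmless for subsolutions but subtracting it is not for supersolutions. Part (b) is unaffected, since it concerns subsolutions only.
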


\begin{proof}
Note that if $p \in T^{+}$ and $M_{\C} (p) >0$, then there exists some $\epsilon_0$ such that for all $\epsilon < \epsilon_0$, $dd^c ( p  - \frac{\epsilon}{2} \abs{z}^2 )$ is strictly positive. If $M_{\C} (p) =0 $, then $M_{\C} (p) < f(z, u (z))$ for all $z$ as $f$ is non-negative.

With the above observation, (a) can be proved by mimicking the argument for Proposition 2.4 in \cite{CC}.

To see (b), one just needs to notice that we assume $u$ to be a psh. 
In this case, if $\varphi \in C^2 (\Omega)$ and $u - \varphi$ takes a local maximum at 
$z_0 \in \Omega$,  $dd^c \varphi $ has to be non-negative.
\end{proof}

We make the following definition for convenience:

\smallskip

\begin{defn}
\label{T2}
$u \in C(\Omega)$ is said to be $T_2$ at a point $z_0 \in \Omega$ if there exists a quadratic polynomial $\varphi$ such that:
\[
u (z) = \varphi(z) +  o ( \abs{z- z_0}^2 ) \quad \; \text{as} \; z \rightarrow z_0.
\]
In this case, we define $D^2 u$ to be $D^2 \varphi$ and $dd^c u$ to be $dd^c \varphi$. 
\end{defn}

It is clear that such a $\varphi$ is unique if exists, and hence $D^2 u, dd^c u$ are well-defined.

 \begin{prop} 
 \label{classical}
 Let $u $ be a continuous in $\Omega$.
\smallskip

{\rm (a)} if $u $ is a viscosity subsolution (resp. supersolution) of $M_{\C} (\cdot) = f (z, \cdot)$. then for any holomorphic function $h$ on $\Omega$, $M_{\C} ( u + \Re(h) ) \geq f(z, u (z))$ (resp. $M_{\C} ( u + \Re(h) ) \leq f(z, u (z))$) in the viscosity sense in $\Omega$.
 
  In particular, if $u$ is a viscosity subsolution (resp. supersolution) of $M_{\C} (\cdot) = f (z, \cdot)$, then  so is $u  -c$ (resp. $u + c$) for any positive constant $c$. 
 
 \smallskip
 
{\rm (b)} Let $u $ be a subsolution (resp. supersolution) for the equation $M_{\C} (\cdot) = f (z, \cdot)$ and $w$ be a continuous psh function that touches $u$ from above (resp. below) at some $z_0 \in \Omega$. Suppose that $w$ is $T_2$ at $z_0$. Then $M_{\C} (w) (z_0)  \geq f (z_0, u(z_0))$ (resp. $M_{\C} (w) (z_0) \leq f (z_0, u(z_0))$). 
 
In particular, if $u$ is viscosity subsolution (resp.supersolution) of $M_{\C} (u) = f (z, u)$ and $u$ is $T_2$ at a point $z_0 \in \Omega$, then $M_{\C} (u) (z_0)  \geq f (z_0, u(z_0))$ (resp. $M_{\C}u (z_0) \leq f (z_0, u(z_0))$).

\smallskip 

{\rm(c)} If $u \in C^2(\Omega) \cap PSH (\Omega)$, then $u$ is a viscosity subsolution (resp. supersolution) of $M_{\C} (u) = f(z,u)$ if and only if $M_{\C} (u) \geq f (z,u)$ (resp. $M_{\C} (u) \leq f (z,u)$) in classical sense. 
 \end{prop}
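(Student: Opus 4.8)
The plan is to deduce everything from Definition \ref{defn viscosity} and Proposition \ref{test condition}, using only elementary manipulations of the testing families $T^+(n)$ and $H(n)$. For part (a), suppose $u$ is a viscosity subsolution and $h$ is holomorphic on $\Omega$. I would show $u + \Re(h)$ is again a subsolution by a direct change of testing function: if $p + \Re(\tilde h)$ with $p \in T^+(n)$, $\tilde h \in H(n)$ touches $u + \Re(h)$ from above at $z_0$, then I want to produce a testing pair for $u$ itself at $z_0$. The issue is that $h$ need not be a polynomial, so $p + \Re(\tilde h) - \Re(h)$ is not of the required form $q + \Re(k)$ with $q \in T^+(n)$, $k \in H(n)$. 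The fix is to replace $h$ by its second-order Taylor polynomial $h_2$ at $z_0$: since $h$ is holomorphic, $h_2 \in \C[n](2) \oplus \C[n](1) \oplus \C = H(n) \oplus \C$, and $h - h_2 = o(|z-z_0|^2)$, so $p + \Re(\tilde h) - \Re(h_2)$ still touches $u$ from above at $z_0$ up to an $o(|z-z_0|^2)$ error — which by the argument in Proposition \ref{test condition} (perturbing $p$ by $\tfrac{\epsilon}{2}|z|^2$ and letting $\epsilon \to 0$) does not affect the conclusion. Since $dd^c \Re(h_2) = 0$, we get $M_\C(p) = M_\C(p + \text{(the }dd^c\text{ of a pluriharmonic polynomial)})$, hence $M_\C(p) \geq f(z_0, u(z_0)) = f(z_0, (u+\Re h)(z_0))$. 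Taking $h = -c$ (constant) gives that $u - c$ is a subsolution; the supersolution case and $u+c$ are symmetric.

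For part (b), assume $u$ is a subsolution and $w$ is a continuous psh function touching $u$ from above at $z_0$ and $T_2$ at $z_0$, so $w(z) = \psi(z) + o(|z-z_0|^2)$ for a unique quadratic $\psi$. By Lemma \ref{decompose}, write $\psi = p + \Re(h)$ with $p \in T(n)$, $h \in H(n)$ (modulo constants, which I absorb). Since $w$ is psh and $T_2$ at $z_0$, $dd^c w(z_0) = dd^c\psi = dd^c p \geq 0$, so in fact $p \in T^+(n)$. Now $p + \Re(h)$ touches $w$ from above at $z_0$ only in the $o(|z-z_0|^2)$ sense, and $w$ touches $u$ from above at $z_0$; composing, $p + \Re(h)$ beats $u$ from above at $z_0$ up to $o(|z-z_0|^2)$. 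Again the standard $\epsilon|z|^2$-perturbation argument from Proposition \ref{test condition} upgrades this to a genuine touching (for the perturbed $p - \tfrac{\epsilon}{2}|z|^2$), and letting $\epsilon \to 0$ yields $M_\C(p) \geq f(z_0, u(z_0))$, i.e. $M_\C(w)(z_0) \geq f(z_0, u(z_0))$. The "in particular" statement is the case $w = u$. The supersolution case is identical with inequalities reversed.

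For part (c), one direction is immediate: if $u \in C^2(\Omega) \cap PSH(\Omega)$ and $M_\C(u) \geq f(z,u)$ pointwise, and $\varphi \in C^2 \cap PSH$ touches $u$ from above at $z_0$, then $D^2(\varphi - u)(z_0) \geq 0$, hence $dd^c\varphi(z_0) \geq dd^c u(z_0) \geq 0$, and monotonicity of $\det_\C$ on the positive cone gives $M_\C(\varphi)(z_0) \geq M_\C(u)(z_0) \geq f(z_0, u(z_0)) = f(z_0, \varphi(z_0))$; invoke Proposition \ref{test condition}(a). Conversely, if $u \in C^2 \cap PSH$ is a viscosity subsolution, apply part (b) with $w = u$ (which is certainly $T_2$ at every point) to conclude $M_\C(u)(z_0) \geq f(z_0, u(z_0))$ at every $z_0$. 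Supersolutions: the reverse inequalities, except one must note that for the viscosity $\Rightarrow$ classical direction the psh hypothesis already guarantees $dd^c u \geq 0$, so $M_\C(u) \leq f$ is the only content, again delivered by (b).

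The main obstacle I anticipate is the bookkeeping in part (a): one must be careful that replacing the (possibly transcendental) holomorphic $h$ by its second-order jet is legitimate, i.e. that adding an $o(|z-z_0|^2)$ perturbation to a testing function does not change whether the viscosity inequality holds. This is exactly the mechanism already used in the proof of Proposition \ref{test condition} (the $\tfrac{\epsilon}{2}|z|^2$ trick to handle the degenerate case $M_\C(p) = 0$ and to pass from "touching up to higher order" to genuine touching), so it is available, but it needs to be invoked cleanly rather than waved at. Everything else is formal.
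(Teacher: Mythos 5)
Your argument is correct and is essentially the paper's own proof written out in detail: the paper simply invokes the observations from the proof of Proposition \ref{test condition} (the $\frac{\epsilon}{2}|z|^2$ perturbation and the dichotomy $M_{\C}(p)=0$ versus $p$ positive definite) together with the arguments of Lemma 2.5 and Corollary 2.6 of \cite{CC}, which is exactly your second-order-jet mechanism, with (a) being the complex-specific invariance handled, as you do, by passing to the second-order Taylor polynomial of $h$. Two small slips to fix in the write-up: in the subsolution half of (b) the perturbation is $+\frac{\epsilon}{2}|z-z_0|^2$ (the minus sign, together with the $M_{\C}(p)=0$ dichotomy you only mention at the end, is what the supersolution half needs so that the perturbed Hermitian part stays in $T^{+}(n)$ --- so that case is not literally ``identical with inequalities reversed''), and the ``in particular'' in (a) should be deduced from the monotonicity of $f$ in its second argument rather than from your asserted equality $f(z_0,u(z_0))=f(z_0,(u+\Re(h))(z_0))$, which is false in general but fortunately only needed as an inequality in the right direction.
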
 

\begin{proof}
With the observations mentioned in the proof of Proposition \ref{test condition}, to prove the above proposition, we just need to carry out the arguments for Lemma 2.5 and Corollary 2.6 in \cite{CC}.
\end{proof}

We end this section with the following convergence Proposition We refer to Proposition2.9 in \cite{CC} for a proof.

\begin{prop}
\label{conv uniform}
If $u_k$ be a sequence of viscosity subsolution (resp. supersolution) of $M_{\C} (\cdot) = f(z, \cdot)$ converging uniformly in compact subset of $\Omega$ to $u$. Then $M_{\C} (u) \geq f (z, u)$ (resp.$M_{\C} (u) \leq f (z, u)$) in $\Omega$ in viscosity sense.
\end{prop}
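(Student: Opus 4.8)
The plan is to prove Proposition \ref{conv uniform} by adapting the standard stability argument for viscosity solutions (as in Proposition 2.9 of \cite{CC}) to our setting, where the subtlety lies in the decomposition of the testing space and in the monotonicity of $f$ in the second variable. I will treat the subsolution case; the supersolution case is entirely analogous, with the roles of maxima and minima interchanged and the inequalities reversed.

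First I would use the characterization in Proposition \ref{test condition}(a): it suffices to show that whenever $\varphi \in C^2(\Omega) \cap PSH(\Omega)$ and $u - \varphi$ attains a local maximum at some $z_0 \in \Omega$, then $M_{\C}(\varphi)(z_0) \geq f(z_0, \varphi(z_0)) = f(z_0, u(z_0))$. Without loss of generality I may assume the maximum is strict (replace $\varphi$ by $\varphi(z) + \delta|z - z_0|^2$ for small $\delta > 0$ and let $\delta \to 0$ at the end, using continuity of $\det_{\C}$ and of $f$), and that the maximum is attained in a closed ball $\overline{B} = \overline{B(z_0, r)} \subset \Omega$. Then by uniform convergence of $u_k \to u$ on $\overline{B}$, the functions $u_k - \varphi$ attain their maximum over $\overline{B}$ at interior points $z_k$, and a standard argument shows $z_k \to z_0$ and $u_k(z_k) \to u(z_0)$. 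Since $\varphi \in C^2(\Omega) \cap PSH(\Omega)$ and $u_k$ is a viscosity subsolution, Proposition \ref{test condition}(a) applied at $z_k$ gives $M_{\C}(\varphi)(z_k) \geq f(z_k, u_k(z_k))$.

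Next I would pass to the limit $k \to \infty$. The left side converges to $M_{\C}(\varphi)(z_0)$ by continuity of $dd^c\varphi$ in $z$ and of $\det_{\C}$. For the right side, continuity of $f$ on $\Omega \times \R$ gives $f(z_k, u_k(z_k)) \to f(z_0, u(z_0))$, so we conclude $M_{\C}(\varphi)(z_0) \geq f(z_0, u(z_0))$, which is what we wanted. (If one only assumes $f$ lower semicontinuous, as the running hypothesis in \S 2 permits, one replaces the limit by a liminf, which still suffices; but since the statement as used downstream involves continuous $f$, the above is enough.) Finally, removing the auxiliary perturbation $\delta|z-z_0|^2$ completes the argument.

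The main obstacle — really the only place where care is needed beyond the classical template — is verifying that the testing function $\varphi$ used at the limiting point can legitimately be used as a testing function for each $u_k$ at the nearby points $z_k$; this is automatic here because we test against a fixed $\varphi \in C^2 \cap PSH$ and the class of admissible test functions in Proposition \ref{test condition}(a) does not depend on the base point, so no re-decomposition into $p + \Re(h)$ is required along the sequence. The second minor point to get right is the elementary lemma that near-maximizers $z_k$ of $u_k - \varphi$ converge to the strict maximizer $z_0$ and that $u_k(z_k) \to u(z_0)$, which follows from uniform convergence together with the strictness of the maximum; I would state this as a one-line sub-claim rather than belabor it. Everything else is continuity of $\det_{\C}$ and of $f$.
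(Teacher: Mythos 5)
Your argument is the standard stability proof that the paper itself simply outsources (it cites Proposition 2.9 of \cite{CC} and gives no details), so in approach you are exactly aligned with the paper, and the subsolution half as you wrote it is correct: perturb to a strict maximum, pick near-maximizers $z_k \to z_0$ with $u_k(z_k) \to u(z_0)$, apply the subsolution property of $u_k$ at $z_k$ with the fixed test function, and pass to the limit using continuity of $dd^c\varphi$, $\det_{\C}$ and $f$.

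The one place where ``entirely analogous'' is not quite honest is the strictness perturbation in the supersolution case. There the natural replacement is $\varphi_\delta = \varphi - \delta\abs{z-z_0}^2$, and this may fail to be plurisubharmonic when $dd^c\varphi(z_0)$ is degenerate, so $\varphi_\delta$ need not be an admissible test function (equivalently, $p - \frac{\delta}{2}\abs{z}^2$ need not lie in $T^{+}(n)$); this is precisely the complex-structure subtlety the paper's Definition \ref{defn viscosity} is built around. The fix is the same observation that opens the paper's proof of Proposition \ref{test condition}: if $M_{\C}(\varphi)(z_0) = 0$ the desired inequality $M_{\C}(\varphi)(z_0) \leq f(z_0,u(z_0))$ is trivial because $f \geq 0$, while if $M_{\C}(\varphi)(z_0) > 0$ then $dd^c\varphi(z_0) > 0$, so for $\delta$ small the perturbed test function stays (locally) psh, and your limiting argument goes through, with $\delta \to 0$ at the end. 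With that one-line case distinction inserted, the proof is complete; everything else (the near-maximizer sub-claim, the touching normalization giving $\varphi(z_k)$ versus $u_k(z_k)$, and the remark that only semicontinuity of $f$ in the right direction is needed) is handled correctly.
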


\section{Convex Envelopes and Jensen's Approximation}

We recall some results for convex envelopes. Our main reference is Ch.3 of \cite{CC}.

\begin{defn}
Let $w$ be a continuous function in an open bounded domain $\Omega$ such that  $\Omega \subset \subset B_r \subset B_{2r}$ and $w \geq 0$ on $\partial \Omega$. Let $w^{-}:= -\inf\{w,0\}$ and extend it to $B_{2r}$ by zero. Define:
\[\begin{split}
\Gamma_w (x) & : =\sup \{v (x)  : v \; \text{is convex in } B_{2r}, \; v \leq -w^{-} \;  \text{in } \Omega\}.
\end{split}\]
We call the set $ \{ w = \Gamma_{w}\}$ the contact set of $w$ in $\Omega$;
\end{defn}

\begin{rem}
Unless $w^{-}$ is identically zero, $\Gamma_w$ is strictly negative in the interior of $\Omega$.
\end{rem}

We recall some terminology in order to state an essential estimate -- the Alexandrov-Bakelman-Pucci (ABP) estimate.

A function P is called a paraboloid of opening $K$ if it is of the form
\[
P (x) = \frac{K}{2} \abs{x}^2 + l 
\]
where $l$ is an affine function.

\begin{defn}
Let $\Omega$ be a bounded domain in $\R^n$ and let $w \in C(\Omega)$. 
A function $w$ is said to be $K$-semi-concave in $\Omega$, if for any point $x \in \Omega$, there exists a paraboloid of opening $K$ touch $w$ from above in $\Omega$.

Similarly, one define $K$-semi-convexity.

$w$ is semi-concave (resp. semi-convex) if there exists a finite $K>0$ such that $w$ is $K$-semi-concave (resp. semi-convex).
\end{defn}

\begin{rem}
By Alexandrov theorem on second order differentiability (see \S 1.1 of [CC]), a semi-convex (semi-concave) functions are almost every $T_2$. 
\end{rem}

Now we shall state the ABP estimate

\begin{thm}
\label{ABP}
Let $w$ be continuous in $\Omega \subset B_r \subset B_{2r} \subset \R^n$ and $w \geq 0$ on $\partial \Omega$. Assume that $w$ is semi-concave. Then 

\smallskip
i) $\Gamma_w \in C^{1,1} (\overline{B_r})$;

\smallskip
ii) $\{w =\Gamma_w\} \subset \Omega$ unless $w^{-}$ is identically zero. 

\smallskip
iii) The image of the set $\{w = \Gamma_w\}$ under the normal mapping of $\Gamma_u$ contains a ball of radius $\sup{w^{-}} /2r$, that is, 
\[
B_{ \sup{w}^{-}/2r } \subset \nabla \Gamma_w (\{w = \Gamma_w\})
\]
\end{thm}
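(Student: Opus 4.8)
The plan is to carry out, for the convex envelope $\Gamma_w$, the three classical steps of the Alexandrov--Bakelman--Pucci argument as in Chapter~3 of \cite{CC}; the one genuinely new input is the semi-concavity hypothesis, which here plays the role that the local Lipschitz property of convex functions plays for the real Monge--Amp\`ere equation. Throughout I think of $\Gamma_w$ as the convex envelope on $B_{2r}$ of the function equal to $\min\{w,0\}$ on $\Omega$ and to $0$ on $B_{2r}\setminus\Omega$; as a supremum of convex minorants trapped between the constants $-\sup w^-$ and $0$, it is convex with $-\sup w^-\le\Gamma_w\le 0$.

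For (i), convexity already gives a supporting hyperplane at every point, so it remains to bound $D^2\Gamma_w$ from above on $\overline{B_r}$. At a contact point $z_0\in\{w=\Gamma_w\}$ --- which by (ii) lies in the interior of $\Omega$ --- the semi-concavity of $w$ furnishes a paraboloid $P$ of some fixed opening $K$ with $P\ge w$ near $z_0$ and $P(z_0)=w(z_0)$; since $\Gamma_w\le\min\{w,0\}\le w\le P$ on a neighbourhood of $z_0$ lying in $\Omega$, the same $P$ touches $\Gamma_w$ from above at $z_0$, so $D^2\Gamma_w(z_0)\le KI$. At a point $z_0\in\overline{B_r}$ off the contact set, the standard structure of convex envelopes (\cite{CC}, Ch.~3) shows $\Gamma_w$ is affine along a segment through $z_0$ whose endpoints lie in $\{w=\Gamma_w\}\cup\partial B_{2r}$; transporting the paraboloid bound from those endpoints along the segment, and using $\dist{\overline{B_r},\partial B_{2r}}=r$ to keep the opening comparable to $K$, produces a paraboloid touching $\Gamma_w$ from above at $z_0$ as well. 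Together with the supporting hyperplanes this controls $D^2\Gamma_w$ in the two-sided Alexandrov sense throughout $B_r$, which a routine argument upgrades to $\Gamma_w\in C^{1,1}(\overline{B_r})$. I expect this step --- the $C^{1,1}$ estimate for the envelope, in particular its behaviour on the non-contact set and the matching of the second-derivative bound across the boundary of the contact set --- to be the main obstacle; everything else is comparatively soft.

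For (ii), assume $w^-\not\equiv 0$ and suppose, for contradiction, that some $z\in\{w=\Gamma_w\}$ lies on $\partial\Omega$. Then $w(z)\ge 0$ while $\Gamma_w(z)\le-w^-(z)=0$, so $\Gamma_w(z)=w(z)=0$; hence the convex function $\Gamma_w\le 0$ attains its maximum at $z$, an interior point of $B_{2r}$ since $\overline{\Omega}\subset B_r$. A convex function with an interior maximum is constant, so $\Gamma_w\equiv 0$ on $B_{2r}$, forcing $w^-\le 0$, i.e. $w^-\equiv 0$ --- a contradiction. Thus $\{w=\Gamma_w\}\subset\Omega$.

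For (iii), put $M:=\sup_\Omega w^-$; if $M=0$ the claim is empty, so assume $M>0$ and fix $x^*\in\Omega$ with $w^-(x^*)=M$, noting $\Gamma_w(x^*)=-M$. Given $\xi\in\R^n$ with $\abs{\xi}<M/2r$, let $a_\xi(x)=\xi\cdot x+c_\xi$ be the smallest affine function of slope $\xi$ lying above $w^-$ on $\overline{\Omega}$; it touches $w^-$ at some $z\in\overline{\Omega}$. Since $\overline{\Omega}\subset B_r$ one has $c_\xi\ge w^-(x^*)-\xi\cdot x^*>M-\abs{\xi}r>M/2$, whereas $w^--\xi\cdot x=-\xi\cdot x<\abs{\xi}r<M/2$ at every point of $\partial\Omega$; hence the contact point $z$ is in $\Omega$, and moreover $w^-(z)=c_\xi+\xi\cdot z>M/2-\abs{\xi}r>0$, so $w(z)<0$. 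As $-a_\xi$ is a convex minorant of $-w^-$, the definition of $\Gamma_w$ gives $\Gamma_w\ge-a_\xi$, so $-w^-(z)=-a_\xi(z)\le\Gamma_w(z)\le-w^-(z)$, whence $\Gamma_w(z)=w(z)$ and $z$ is a genuine contact point. By (i), $\Gamma_w$ is differentiable at $z$ and the plane $-a_\xi$, supporting $\Gamma_w$ from below there, forces $\nabla\Gamma_w(z)=-\xi$. As $\xi$ ranges over the ball of radius $M/2r$, so does $-\xi$, and therefore $B_{M/2r}\subset\nabla\Gamma_w(\{w=\Gamma_w\})$.
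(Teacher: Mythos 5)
You should first note that the paper does not actually prove Theorem \ref{ABP}: it is quoted and used as imported background, with the reader sent to \S 3.1 of \cite{CC}. So the only meaningful comparison is with the argument there, which your proposal reconstructs. Your part (ii) is correct and complete. Your part (i) follows the right strategy (the semi-concavity paraboloid is transferred to $\Gamma_w$ at contact points, and the affine structure of the envelope is used off the contact set), but the decisive step is only gestured at: ``transporting the paraboloid bound from those endpoints along the segment'' is precisely the content of the corresponding lemma in \cite{CC}. In particular you must deal with segments having an endpoint on $\partial B_{2r}$, where there is no touching paraboloid at all and the quadratic bound has to be manufactured from $-\sup w^- \leq \Gamma_w \leq 0$ together with the distance $r$ between $\overline{B_r}$ and $\partial B_{2r}$ (this is exactly why the conclusion is only claimed on $\overline{B_r}$), and you must then invoke or prove the lemma that a convex function touched from above at every point of $B_r$ by paraboloids of uniform opening is $C^{1,1}$ there. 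As written, (i) is a sketch of the hard half of the theorem.

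Part (iii) contains a genuine gap. You check $-a_\xi \leq -w^-$ only on $\overline{\Omega}$, but to conclude $\Gamma_w \geq -a_\xi$ you need $-a_\xi$ to be admissible for the envelope you yourself adopted (the envelope over $B_{2r}$ of the zero extension), i.e.\ also $-a_\xi \leq 0$ on $B_{2r}\setminus \Omega$, which amounts to $c_\xi \geq 2r\abs{\xi}$; from $c_\xi \geq M - r\abs{\xi}$ this is guaranteed only for $\abs{\xi} \leq M/3r$, not on the whole range $\abs{\xi} < M/2r$. (Under the paper's literal definition, where the constraint $v\leq -w^-$ is imposed only in $\Omega$, your step would be legitimate, but that reading makes $\Gamma_w \equiv +\infty$ outside the convex hull of $\overline{\Omega}$ and is clearly not intended.) Moreover the step cannot be repaired: for the $B_{2r}$-envelope the inclusion with radius $M/2r$ is false in general. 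In dimension one with $r=1$, take $\Omega \subset (-1,1)$ and $w = \min\{0,\, -M + \tfrac{K}{2}(x-x_0)^2\}$ with $x_0$ close to $1$ and $K$ large; this $w$ is $K$-semi-concave, $w\geq 0$ near $\partial\Omega$, and the envelope over $(-2,2)$ consists of two tangent lines and the parabolic cap, so $\nabla\Gamma_w$ on the contact set covers only about $(-M/(2+x_0),\, M/(2-x_0))$ and misses slopes near $-M/2$. Your sliding-plane computation over $\overline{\Omega}$ is fine and, once restricted to $\abs{\xi} < M/3r$ so that $-a_\xi$ really is a competitor, it proves $B_{M/3r}\subset \nabla\Gamma_w(\{w=\Gamma_w\})$; since every later use in the paper (Corollary \ref{ABP gradient}, the proof of Theorem \ref{intro CP}, Theorem \ref{C0e ABP}) only needs a fixed dimensional multiple of $\sup w^-/\diam{\Omega}$, the correct move is to prove, and to read the statement with, $3r$ in place of $2r$ rather than to force the stated constant.
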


\begin{rem}
ABP estimates holds under weaker assumptions. For details, one may refer to \S3.1 of \cite{CC}.
\end{rem}

Following immediate consequence of \ref{ABP} will play key role in our proof of Theorem \ref{intro CP}.
\begin{cor}
\label{ABP gradient}
Let $w \in C(\overline{\Omega})$ be semi-concave and $E \subset \Omega$ be a set such that $\abs{\Omega \setminus E} = 0$. Suppose $w \geq 0$ on $\partial \Omega$ and $\min_{\Omega} w = -a, a > 0$. Then for any $\delta \in (0, a/2d), d =\diam{\Omega}$, there exists a point $x_0 \in E$ such that: 

\smallskip
i) $w (x_0)) = \Gamma_{w} (x_0)< 0$.

\smallskip
ii) $\Gamma_w$ is $T_2$ at $x_0$ and
\[
\abs{\nabla w} (x_0) = \abs{\nabla \Gamma_w} (x_0) < \delta , \quad  ( \det_{\R} D^2 \Gamma_{w} (x_0) )^{1/n} \geq \frac{\delta}{d}
\]
\end{cor}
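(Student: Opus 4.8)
\textbf{Proof proposal for Corollary \ref{ABP gradient}.}

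The plan is to combine Theorem \ref{ABP} with a Vitali-type covering/measure argument to locate a good contact point inside the full-measure set $E$. First I would apply Theorem \ref{ABP} to $w$: since $w$ is semi-concave and $w \geq 0$ on $\partial\Omega$ with $\min_\Omega w = -a < 0$, part (i) gives $\Gamma_w \in C^{1,1}(\overline{B_r})$ and part (iii) gives the containment $B_{a/2r} \subset \nabla\Gamma_w(\{w=\Gamma_w\})$ (here $\sup w^- = a$). Actually for the gradient bound I want to work with the smaller ball $B_\delta$ with $\delta < a/2d$, so I would instead want a statement that $\nabla\Gamma_w$ maps the contact set onto a ball of radius $\sup w^-/2d = a/2d$ when $d = \diam\Omega$ is used in place of $2r$ — this follows by the same proof as Theorem \ref{ABP}(iii), simply sliding the supporting paraboloids/cones over the domain of diameter $d$ rather than $2r$; alternatively one shrinks $B_r$ to the convex hull of $\Omega$. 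So after this step I have $B_\delta \subset B_{a/2d} \subset \nabla\Gamma_w(\{w = \Gamma_w\})$ for every $\delta \in (0, a/2d)$.

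Next I would exploit the $C^{1,1}$ regularity from Theorem \ref{ABP}(i): since $\Gamma_w$ is $C^{1,1}$, its gradient map $\nabla\Gamma_w$ is Lipschitz, hence the area formula (or just the Lipschitz estimate on the Jacobian, which is $\det_{\R} D^2\Gamma_w$ a.e.) gives
\[
\abs{B_\delta} \;\leq\; \abs{\nabla\Gamma_w(\{w = \Gamma_w\})} \;\leq\; \int_{\{w=\Gamma_w\}} \det_{\R} D^2\Gamma_w \, dx.
\]
Now I restrict the contact set to $\{w = \Gamma_w\} \cap E$; since $\abs{\Omega\setminus E} = 0$ this does not change the integral, and since $\Gamma_w$ is $C^{1,1}$ it is twice differentiable (i.e.\ $T_2$) almost everywhere, so we may also intersect with the set of $T_2$-points of $\Gamma_w$ at no cost. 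Thus there must exist a point $x_0$ in $\{w = \Gamma_w\} \cap E$ at which $\Gamma_w$ is $T_2$ and $\det_{\R} D^2\Gamma_w(x_0) > 0$. The remaining task is to upgrade this to the quantitative conclusions (i)–(ii). For (i): $x_0$ lies in the contact set, so $w(x_0) = \Gamma_w(x_0)$, and this value is $< 0$ by the Remark following the definition of $\Gamma_w$ (since $w^-$ is not identically zero, $\Gamma_w < 0$ in the interior, and by Theorem \ref{ABP}(ii) the contact set lies in $\Omega$). For the gradient bound in (ii): here I would be more careful and, rather than taking an arbitrary contact point, note that the image $\nabla\Gamma_w(\{w=\Gamma_w\})$ contains $B_\delta$, so in particular there is a contact point mapping into $B_\delta$; but I need simultaneously the lower Jacobian bound, so I should argue on the preimage $S_\delta := (\nabla\Gamma_w)^{-1}(B_\delta) \cap \{w=\Gamma_w\} \cap E \cap \{T_2\}$. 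On $S_\delta$ we have $\abs{\nabla\Gamma_w} < \delta$ automatically, and $\abs{B_\delta} \leq \int_{S_\delta}\det_{\R} D^2\Gamma_w\,dx$; if $\det_{\R} D^2\Gamma_w < (\delta/d)^n$ everywhere on $S_\delta$ then the right side is at most $(\delta/d)^n \abs{\nabla\Gamma_w(S_\delta)} \le (\delta/d)^n \abs{B_\delta}\cdot$(something), which — choosing the normalization of the cone-slide so that $S_\delta \subset \Omega$ has diameter $\leq d$ and hence the Lipschitz bound on $\nabla\Gamma_w$ forces $\abs{\nabla\Gamma_w(S_\delta)} \leq$ a controlled multiple — yields a contradiction for the right choice of constants. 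Cleanest is: since $\nabla\Gamma_w$ restricted to $S_\delta \subset B_\delta$'s preimage is Lipschitz with the Jacobian being $\det_\R D^2\Gamma_w$, the inequality $\abs{B_\delta} \le \int_{S_\delta}\det_\R D^2\Gamma_w$ together with $S_\delta$ having diameter $\le d$ forces $\det_\R D^2\Gamma_w \ge (\delta/d)^n$ at some point of $S_\delta$ — because if it were $<(\delta/d)^n$ on all of $S_\delta$, then $\abs{B_\delta} = \omega_n\delta^n$ would be strictly dominated, a scaling contradiction. Also $\abs{\nabla w}(x_0) = \abs{\nabla\Gamma_w}(x_0)$ since at a contact point where both are differentiable the graphs are tangent.

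The main obstacle I anticipate is the bookkeeping in the last step: getting the \emph{simultaneous} lower bound $(\det_\R D^2\Gamma_w(x_0))^{1/n} \geq \delta/d$ and upper bound $\abs{\nabla w}(x_0) < \delta$ at \emph{one and the same point} $x_0 \in E$, with the correct constant $d = \diam\Omega$ rather than $2r$. This requires running the ABP comparison with supporting affine functions whose slopes range over $B_\delta$ and whose contact points stay in $\Omega$ (diameter $d$), so that the normal-map image is exactly $B_\delta$ up to the diameter-$d$ geometry; the measure inequality $\abs{B_\delta} \leq \int_{\text{contact}\cap E} \det_\R D^2\Gamma_w$ combined with a pigeonhole on the sublevel sets of the Jacobian then pins down $x_0$. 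Everything else — the $C^{1,1}$ regularity, twice-differentiability a.e., the negativity of $w(x_0)$, and the identification $\nabla w(x_0) = \nabla\Gamma_w(x_0)$ at contact points — is routine given Theorem \ref{ABP} and the Remark on a.e.\ $T_2$ regularity of semi-convex functions.
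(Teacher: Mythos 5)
Your proposal is correct and follows essentially the same route as the paper: apply Theorem \ref{ABP}, restrict the contact set to the preimage of $B_\delta$ under $\nabla\Gamma_w$ (the paper's set $A$, your $S_\delta$), discard the null sets coming from $E$ and the a.e.\ $T_2$-points of the $C^{1,1}$ envelope, and pigeonhole via the area formula and the bound $\abs{\Omega}\leq \abs{B_d}$ to find a point with $\det_{\R} D^2\Gamma_w \geq (\delta/d)^n$, the remaining conclusions holding automatically at any such contact point. Your extra care about the $2r$ versus $d=\diam{\Omega}$ constant and about tangency of $w$ and $\Gamma_w$ at the contact point only tightens details the paper glosses over (its proof even writes $\delta\in(0,a/2r)$), so there is no gap.
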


\begin{proof}
We may assume $0 \in \Omega \subset B_{d}$. Let $H$ be the set consisting $T_2$-points of $\Gamma_u$ in $\Omega$. i) of Theorem \ref{ABP} implies 
\[
\abs{\Omega \setminus H} = 0.
\]

Fix $\delta \in (0, a/2r)$, then by ii) of Theorem\ref{ABP}, there exists a non-empty subset $A $ of the contact set $\{w = \Gamma_w\}$ such that
\[
\nabla \Gamma_{w} (A) = B_{\delta}
\]
and any point $x \in A \cap H$ satisfies:
\[
w (x) = \Gamma_{w} (x) < 0, \quad \abs{\nabla w}(x)  = \abs{\nabla \Gamma_w (x) } < \delta
\] 

Moreover, the $C^{1,1}$-regularity along with the area formula implies.
\begin{equation}
\label{integral}
\abs{B_{\delta}} \leq  \abs{\nabla \Gamma_{w} (A )} = \int_{A \cap E \cap H} \det D^2 \Gamma_w \; dx.
\end{equation}
Here we used the fact $\abs{A \setminus (E \cup H)} = 0$. 

Since $A \subset \Omega \subset B_{d}$, (\ref{integral}) implies $A \setminus (E \cup H)$ has positive measure and there exits $x_0 \in A \cap E\cap H$ such that
\[
\det D^2 \Gamma_w (x_0) \geq \frac{\delta^n}{d^n}.
\]
This completes the proof of the corollary.
\end{proof}

\begin{rem}
For the definition and properties of normal mapping of a convex function, one may refer to \cite{Gu}.
\end{rem}

We now adapt standard Jensen's approximation theorem to our setting.
Our main reference is Ch.5 of \cite{CC}. 

\begin{defn}
Let $u \in C(\overline{\Omega} ) \cap PSH (\Omega)$ and $M_{\C} (u)\geq f(x, u)$ in viscosity sense in $\Omega$. For $\epsilon > 0$, define
\[ \begin{split}
	& u^{\epsilon} (z_0):= \sup_{z \in \overline{\Omega} } \{ u(z) -\frac{1}{\epsilon}  \abs{z -z_0}^2 \} , \quad  z_0 \in \Omega ;\\
 &  f_{\epsilon} (z_0, t):= \inf  \{ f (z, t) : z \in B_{\tau} (x_0)  \cap \overline{\Omega} \} , \quad  \tau =( \epsilon \osc_{\Omega} u )^{1/2}
\end{split}
\] 

Similarly, one define $v_{\epsilon}, f^{\epsilon}$ for a supersolution.
\end{defn}

\begin{prop}
\label{Jensen}
Assume that $u \in C(\overline{\Omega} ) \cap PSH (\Omega)$ and $M_{\C} (u) \geq f(x, u).$ Then:

\smallskip
{\rm (a)} $u^{\epsilon} \in C^{0,1} (\overline{\Omega}) \cap PSH (\Omega)$ with Lipschitz constant smaller than $\frac{3}{\epsilon} \diam{\Omega}$.

\smallskip
{\rm (b)} $u^{\epsilon}$ decreases uniformly to $u$ in $\overline{\Omega}$.

\smallskip
{\rm (c)} $u^{\epsilon}$ is semi-convex in $\Omega$.

\smallskip
{\rm (d)} $f_{\epsilon}$ increases uniformly to $f$.

\smallskip

{\rm (e)} For any compact domain $U$ of $\Omega$, there exists a $\epsilon_0 $, depending on $\osc_{\overline{\Omega}} u$ and $\dist{U, \partial \Omega}$, such that, for any $\epsilon < 0$
\[
M_{\C} (u^{\epsilon}) \geq f_{\epsilon} (x ,u^{\epsilon}), \quad \text{ in } U
\]
in viscosity sense. 

Corresponding statements holds for $v_{\epsilon},  f^{\epsilon}$ of a supersolution.
\end{prop}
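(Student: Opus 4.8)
**The plan is to prove Proposition~\ref{Jensen} by adapting the classical Jensen sup-convolution machinery, handling each assertion in turn.**

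For part (a), I would first note that $u^\epsilon$ is a supremum of the family of functions $z_0 \mapsto u(z) - \frac{1}{\epsilon}|z-z_0|^2$ indexed by $z \in \overline{\Omega}$, each of which is smooth (a paraboloid plus a constant) in $z_0$; hence $u^\epsilon$ is semi-convex, which gives part (c) essentially for free (with semi-convexity constant $\sim 2/\epsilon$). The Lipschitz bound in (a) follows because, for $z_0, z_0' \in \overline{\Omega}$, the point $z$ achieving (or nearly achieving) the supremum defining $u^\epsilon(z_0)$ must satisfy $|z - z_0|^2 \le \epsilon\,\osc_\Omega u$, so $\tau := (\epsilon\,\osc_\Omega u)^{1/2}$ controls where the maximizer lives; then $|u^\epsilon(z_0) - u^\epsilon(z_0')| \le \frac{1}{\epsilon}\big||z-z_0|^2 - |z-z_0'|^2\big| \le \frac{1}{\epsilon}(2\diam\Omega + |z_0-z_0'|)|z_0 - z_0'|$, which gives the stated constant $\tfrac{3}{\epsilon}\diam\Omega$ after absorbing (note $|z_0-z_0'| \le \diam\Omega$). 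That $u^\epsilon \in PSH(\Omega)$ follows from Proposition~\ref{conv uniform} together with part (e), or more directly: a sup of a family of psh functions (the competitors $u(z) - \frac{1}{\epsilon}|z-z_0|^2$ are psh in $z_0$ since $-\frac{1}{\epsilon}|z-z_0|^2$ is, being $-\frac{1}{\epsilon}|z_0|^2$ plus pluriharmonic) whose upper semicontinuous regularization is itself, hence psh.

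For part (b): clearly $u^\epsilon \ge u$ (take $z = z_0$), and $u^\epsilon$ is decreasing in $\epsilon$ since enlarging $\epsilon$ increases the penalty. Uniform convergence follows from the uniform continuity of $u$ on the compact set $\overline{\Omega}$: if $z$ nearly achieves the sup at $z_0$, then $|z - z_0| \le \tau = (\epsilon\,\osc_\Omega u)^{1/2} \to 0$, so $u^\epsilon(z_0) - u(z_0) \le u(z) - u(z_0) \le \omega_u(\tau)$ where $\omega_u$ is the modulus of continuity of $u$. Part (d) is immediate from the definition: $f_\epsilon(z_0,t) = \inf\{f(z,t) : z \in B_\tau(z_0) \cap \overline{\Omega}\}$ is monotone increasing as $\tau \downarrow 0$, and converges to $f(z_0,t)$ by continuity of $f$; the uniform convergence on compacta follows from uniform continuity of $f$ on compact subsets of $\overline{\Omega} \times \R$ (and using part (b) to keep the relevant $t$-values in a fixed compact interval).

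\textbf{Part (e) is the main obstacle} and is the heart of the proposition. The idea is the standard one: if $\varphi \in C^2 \cap PSH$ touches $u^\epsilon$ from above at $z_0 \in U$, one must produce a point $\tilde z \in \Omega$ where an appropriate test function touches $u$ from above, then invoke the subsolution property of $u$. Concretely, let $z^* \in \overline{\Omega}$ achieve the supremum defining $u^\epsilon(z_0)$, so $u^\epsilon(z_0) = u(z^*) - \frac{1}{\epsilon}|z^* - z_0|^2$ and $|z^* - z_0| \le \tau$. One checks that the shifted function $w \mapsto \varphi(w + z_0 - z^*) + \frac{1}{\epsilon}|z^* - z_0|^2$ touches $u$ from above at $z^*$ (using the definition of $u^\epsilon$ as a sup and the touching of $\varphi$ at $z_0$); this shifted function is still $C^2$, still psh (translation invariance), and has the same complex Hessian $dd^c\varphi(z_0)$. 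The condition $\epsilon < \epsilon_0$ with $\epsilon_0$ depending on $\osc_{\overline\Omega} u$ and $\dist(U,\partial\Omega)$ is needed to guarantee $z^* \in \Omega$ (not merely $\overline\Omega$): since $|z^* - z_0| \le \tau = (\epsilon\,\osc_\Omega u)^{1/2}$ and $z_0 \in U$, one needs $\tau < \dist(U,\partial\Omega)$, i.e. $\epsilon < \dist(U,\partial\Omega)^2 / \osc_\Omega u =: \epsilon_0$. Then by Proposition~\ref{test condition}(a) applied to $u$, $M_\C(dd^c\varphi(z_0)) \ge f(z^*, u(z^*))$. Finally one relates the right-hand side: $z^* \in B_\tau(z_0) \cap \overline{\Omega}$, and $u(z^*) \ge u^\epsilon(z_0) \ge \varphi(z_0)$ — wait, actually $u(z^*) = u^\epsilon(z_0) + \frac{1}{\epsilon}|z^*-z_0|^2 \ge u^\epsilon(z_0) = \varphi(z_0)$ — so by monotonicity of $f$ in the second variable, $f(z^*, u(z^*)) \ge f(z^*, \varphi(z_0)) \ge \inf\{f(z,\varphi(z_0)) : z \in B_\tau(z_0)\cap\overline\Omega\} = f_\epsilon(z_0, \varphi(z_0))$. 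Combining, $M\varphi(z_0) \ge f_\epsilon(z_0, \varphi(z_0))$, which by Proposition~\ref{test condition}(a) is exactly the assertion that $M_\C(u^\epsilon) \ge f_\epsilon(x, u^\epsilon)$ in $U$ in the viscosity sense. The supersolution statements are proved by the symmetric argument with the inf-convolution $v_\epsilon$ and $f^\epsilon$, reversing all inequalities.
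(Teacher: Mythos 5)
Your treatment of parts (b)--(e) is essentially the paper's route: the paper simply invokes the Jensen approximation argument of Caffarelli--Cabr\'e (Ch.~5) ``with obvious modification,'' and your part (e) -- translating the test function to the point $z^*$ where the sup-convolution is attained, requiring $\epsilon<\dist{U,\partial\Omega}^2/\osc_{\overline\Omega}u$ so that $z^*\in\Omega$, and then using monotonicity of $f$ in $t$ together with $u(z^*)\ge \varphi(z_0)$ to pass from $f(z^*,u(z^*))$ to $f_\epsilon(z_0,\varphi(z_0))$ -- is exactly the intended adaptation and is carried out correctly. (Minor wording slip in (b): increasing $\epsilon$ \emph{decreases} the penalty $\tfrac1\epsilon|z-z_0|^2$, so $u^\epsilon$ increases in $\epsilon$ and hence decreases to $u$ as $\epsilon\downarrow 0$; your conclusion is right, your reason is stated backwards.)

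There is, however, one genuinely false step: your justification that $u^\epsilon\in PSH(\Omega)$. You claim each competitor $z_0\mapsto u(z)-\tfrac1\epsilon|z-z_0|^2$ is psh because $-\tfrac1\epsilon|z_0|^2$ is ``psh plus pluriharmonic''; but $-|z_0|^2$ is strictly pluri\emph{super}harmonic, so each competitor has complex Hessian $-\tfrac1\epsilon I<0$ and is not psh, and a supremum of plurisuperharmonic functions is not psh in general. Your fallback (``follows from Proposition \ref{conv uniform} together with part (e)'') does not repair this: in the paper's Definition \ref{defn viscosity} a viscosity subsolution is by definition a continuous \emph{psh} function, so invoking (e) or Proposition \ref{conv uniform} to establish pshness of $u^\epsilon$ is circular. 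The correct argument is the one the paper indicates: change variables $y=z-z_0$, so that
\[
u^{\epsilon}(z_0)=\sup_{y}\ \bigl\{\,u(z_0+y)-\tfrac1\epsilon|y|^{2}\,\bigr\},
\]
where for each \emph{fixed} $y$ the function $z_0\mapsto u(z_0+y)-\tfrac1\epsilon|y|^{2}$ is a translate of $u$ minus a constant, hence psh where defined (and the effective maximizers satisfy $|y|\le\tau$, so this suffices locally); then the supremum of this family is psh because its upper semicontinuous regularization coincides with $u^\epsilon$ itself (Choquet's lemma plus the already-established continuity of $u^\epsilon$). With that substitution, and noting that semi-convexity in (c) should be justified by the standard observation that $u^\epsilon(z_0)+\tfrac1\epsilon|z_0|^2$ is a supremum of affine functions of the real coordinates (rather than merely ``a sup of smooth functions''), your write-up matches the paper's proof.
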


\begin{proof}
To prove (b), (c) and (e), one can carry out the argument on page 44 of \cite{CC} with obvious modification. (d) is clear. In (a), Lipschitz part is also proved in \cite{CC}, the PSH part follows from the change of variable $y = z-z_0$ and Choquet Lemma.
\end{proof}

\section{Dirichlet Problem}

First, we establish the comparison principle in Theorem \ref{intro CP}. The key idea of our proof originates from Sec.5 of \cite{CC}.

\medskip
\textit{Proof of Theorem \ref{intro CP}.} Without lose of generality, we may assume $0 \in \Omega$. Let $d = \diam{\Omega}$. Moreover, by replacing $v$ by $v + \delta$, we may assume $v > u$ on $\partial \Omega$. Theorem\ref{intro CP} will follow from this case by taking $\delta$ to be zero.

Argue by contradiction. Write $w = v - u$. Assume there exists $x_0 \in \Omega$ such that
\[
w (x_0 )  = -a < 0.
\]
Regularizing $v, u$ via Jensen's approximation, and write $w_{\epsilon} = v_{\epsilon} - u^{\epsilon}$. Since $w >0 $ on $\partial \Omega$ and $w_{\epsilon}$ increases uniformly to $w$, we may fix a compact subdomain $U$ of $\Omega$ such that $w_{\epsilon} \geq 0 $ in $\Omega \setminus U$  for all $\epsilon$ sufficiently small,

Fix any $\epsilon > 0$ small, denote $E_{\epsilon} \subset U$ the set consists of points on which $w_{\epsilon}, v_{\epsilon}, -u^{\epsilon}$ are $T_2$. By (b) of Proposition \ref{Jensen}, $\abs{U \setminus E_{\epsilon}}  = 0$ and $w_{\epsilon}$ is semi-concave. Apply Corollary \ref{ABP gradient}, we may choose $x_{\epsilon} \in E_{\epsilon}$ such that
\begin{equation}
\label{cp1}
w_{\epsilon} (x_{\epsilon} )= \Gamma_{w} (x_{\epsilon})< 0 , \quad \text{det}_{\R}^{1/2n} (D^2 \Gamma_{w_{\epsilon}}) (x_{\epsilon}) \geq \frac{a}{3d^2}
\end{equation}

By applying (e) of Proposition \ref{Jensen} with respect $U$, we obtain
\begin{equation}
\label{cp2}
M_{\C} (v^{\epsilon}) (x_{\epsilon}) \leq f^{\epsilon} (x_{\epsilon}, v_{\epsilon} (x_{\epsilon}) ).
\end{equation}
and
\begin{equation}
\label{cp3}
M_{\C} (u^{\epsilon}) (x_{\epsilon}) \geq f_{\epsilon} (x_{\epsilon}, u^{\epsilon} (x_{\epsilon})).
\end{equation}
 for all sufficiently small $\epsilon $. Here the expression are computed in the sense of (b) of Proposition \ref{classical}. 

Combine Eq.( \ref{cp1}, \ref{cp2}, \ref{cp3}) and use the Minkowski inequality of determinant and Lemma \ref{RC inequality}, along with the fact $\Gamma_{w_{\epsilon}} + u^{\epsilon}$ touches $v_{\epsilon}$ from below at $x_{\epsilon}$. we obtain:
\begin{equation}
\label{cp cd}
\begin{split}
f^{\epsilon} (x_{\epsilon},   v_{\epsilon} (x_{\epsilon}))   & \geq f_{\epsilon} (x_{\epsilon}, u^{\epsilon} (x_{\epsilon}) ) + \frac{a}{3d^2} \\ 
&  \geq f_{\epsilon} (x_{\epsilon}, v_{\epsilon} (x_{\epsilon}) )   + \frac{a}{3d^2} 
\end{split}
\end{equation}
In the last inequality, we have used the fact that $f(x, t)$ is non-decreasing in $t$.

Since $a/3d^2$ is independent of $\epsilon$ and $f^{\epsilon} , f_{\epsilon}$ converges uniform to $f$, Eq.\ref{cp cd} leads to a contradiction when $\epsilon$ is taken sufficiently small. $\Box$

\begin{rem}
i) The continuity of $u,v $ is not essential to the proof. Semi-continuity with corresponding boundary condition is sufficient.

ii) This proof can be applied to more general operator with suitable structure conditions. 
\end{rem}

\medskip

Now, we shall apply Perron Method to prove Theorem \ref{intro solve}. Our argument, learned from Prof. Ovidiu Savin, differs from standard argument in \cite{IL}.

The key part is the following lemma:

\begin{lem}
\label{sub replace}
Under assumption of Theorem \ref{intro solve}. For each subsolution $u$ with $u |_{\partial \Omega} \leq g$, there exists $\tilde{u} \in C(\overline{\Omega}) \cap PSH (\Omega)$ such that 

\smallskip
i) $\tilde{u}$ is a viscosity subsolution of $M (\cdot ) = f(x, \cdot)  $ in $\Omega$.

\smallskip
ii) $\tilde{u} \geq u$ in $\overline{\Omega}$ and $\tilde{u} |_{\partial \Omega}= g$.

\smallskip 
iii) The modulus of continuity $\omega_{\tilde{u}}$ of $\tilde{u}$ satisfies: for all $z_1, z_2 \in \overline{\Omega}$
\[
\omega_{\tilde{u}} (\abs{z_1 - z_2}) \leq (3d \osc_{V} f ) \abs{z_1 - z_2} + d^2  \omega_f (\abs{z_1 - z_2} + \omega (\abs{z_1 -z_2})).
\]
where $\omega = \max\{ \omega_{\underline{u}}, \omega_{\overline{u}}\}$ and $\omega_{f}$ is the modulus of continuity of $f^{1/n}$ in the region 
\[
V = \overline{\Omega} \times [ -M , M], \quad M = \max\{ \norm{g}_{\infty} + d^2, \norm{\underline{u}}_{\infty} + d^2\}.
\]

\end{lem}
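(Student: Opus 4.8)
The plan is to construct $\tilde{u}$ as a Perron-type supremum and then verify the three properties, with the bulk of the work going into the modulus-of-continuity estimate iii). Define
\[
\tilde{u}(z) := \sup\{ w(z) : w \in C(\overline{\Omega}) \cap PSH(\Omega),\ w \text{ a viscosity subsolution of } M_{\C}(\cdot)=f(x,\cdot),\ w|_{\partial\Omega} \leq g,\ w \geq \underline{u}\}.
\]
The family is nonempty (it contains $\underline{u}$, and it contains the given $u$ after noting $u \geq \underline{u}$ may be arranged by passing to $\max\{u,\underline{u}\}$, which is still a subsolution by the standard fact that the maximum of two psh subsolutions is a subsolution — this follows from Proposition \ref{test condition}(a) since a $C^2$ psh function touching $\max\{u,\underline{u}\}$ from above at $z_0$ touches one of them from above there). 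The family is also bounded above by $\overline{u} + $ (a suitable constant), using that any subsolution $w$ satisfies $M_{\C}(w) \geq 0$, hence $w$ is psh, hence $w - \overline{u}$ is subharmonic (since $\overline{u}$ is harmonic) and $\leq 0$ on $\partial\Omega$. So $\tilde{u}$ is well-defined and finite. That $\tilde{u}$ is itself a subsolution and psh follows from taking upper semicontinuous regularization together with Proposition \ref{conv uniform} (or a standard Perron argument): the usc envelope $\tilde{u}^*$ is in the family, so $\tilde{u} = \tilde{u}^*$; this gives i), and ii) is immediate from $\underline{u} \leq \tilde{u}$ and $\tilde{u}|_{\partial\Omega} \leq g$ combined with $\underline{u}|_{\partial\Omega} = g$.

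The heart of the matter is iii), which simultaneously yields continuity of $\tilde{u}$ up to the boundary and the explicit modulus. The strategy I would use, which is the one from Savin's lectures alluded to in the introduction, is a translation/comparison argument: fix a vector $\zeta \in \C^n$ with $|\zeta|$ small, and compare $\tilde{u}(z)$ with the translated competitor $\tilde{u}(z+\zeta)$ adjusted by a barrier. Concretely, one shows that the function
\[
z \mapsto \tilde{u}(z+\zeta) - \big(3d\,\osc_V f\big)|\zeta| - d^2\,\omega_f\big(|\zeta| + \omega(|\zeta|)\big)
\]
(suitably interpreted on the sub-domain where $z+\zeta \in \overline{\Omega}$, and capped against $\underline{u}$ or pushed down near the boundary) is again a member of the Perron family, whence it lies below $\tilde{u}(z)$; by symmetry in $\zeta \leftrightarrow -\zeta$ one gets the two-sided bound $\omega_{\tilde{u}}(|\zeta|) \leq (3d\,\osc_V f)|\zeta| + d^2\,\omega_f(|\zeta|+\omega(|\zeta|))$. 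To see the translated competitor is a subsolution with the right boundary behavior: the constant shift keeps it a subsolution of $M_{\C}(\cdot) = f(x,\cdot)$ only after one absorbs the modulus $\omega_f$ of $f^{1/n}$ to correct for the $x$-dependence of $f$ (this is exactly where the $d^2\omega_f(\cdots)$ term and the region $V$ enter — the scale $d^2$ coming from comparing $M_{\C}^{1/n}$-values via the diameter), and one checks the boundary inequality $\tilde{u}(z+\zeta) - (\text{shift}) \leq g(z)$ on $\partial\Omega$ using the modulus of continuity of the barriers built from $\underline{u}$ and $\overline{u}$ near $\partial\Omega$ (this is where $\omega = \max\{\omega_{\underline u},\omega_{\overline u}\}$ and the linear term $3d\,\osc_V f\,|\zeta|$ come from). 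The constant $M$ is simply the a priori bound $\|\tilde u\|_\infty \leq M$ obtained from $\underline{u} \leq \tilde u \leq \overline u + d^2$ (via $\overline u$ harmonic with data $g$ and the maximum principle), which guarantees all relevant function values land in $[-M,M]$, so that $\osc_V f$ and $\omega_f$ on $V$ are the relevant quantities.

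The main obstacle I anticipate is the careful bookkeeping in the translation argument near $\partial\Omega$: one must show that the shifted-and-lowered competitor, after being glued to $\underline{u}$ (or to a boundary barrier) on the region where $z+\zeta$ exits $\overline{\Omega}$, remains a global viscosity subsolution in $\Omega$ and still satisfies the boundary constraint $\leq g$. Establishing that the glued function is a subsolution again reduces, via Proposition \ref{test condition}(a), to the local maximum principle for the max of two subsolutions, but getting the numerical constants in iii) to come out as stated requires a precise barrier estimate controlling $\tilde u$ near $\partial\Omega$ in terms of $g$, $\underline u$, $\overline u$ — i.e. proving $|\tilde u(z) - g(\xi)| \leq \omega(|z-\xi|) + (\text{lower order})$ for $z$ near $\xi \in \partial\Omega$, which in turn uses that $\underline{u}$ is a subsolution with $\underline u|_{\partial\Omega}=g$ as the lower barrier and $\overline u + $ (a defining-function correction) as the upper barrier. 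I would handle the $f$-dependence correction by the elementary scaling inequality for $M_{\C}^{1/n}$: if $M_{\C}^{1/n}(w) \geq f^{1/n}(x,w)$ and we translate, the error in $f^{1/n}$ over a ball of radius $\lesssim |\zeta| + \omega(|\zeta|)$ is at most $\omega_f(|\zeta|+\omega(|\zeta|))$, and this is compensated by subtracting a multiple $\lesssim d^2$ of it (a quadratic barrier of the corresponding opening on a domain of diameter $d$), which is precisely the $d^2\,\omega_f$ term.
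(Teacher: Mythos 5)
Your route is genuinely different from the paper's: the paper does not define $\tilde u$ as a Perron envelope at all, but by the explicit sup-convolution-type formula $\tilde{u}(z_0) = \sup_{y \in \overline{\Omega}} \max\bigl\{\, u(y) - \omega(\abs{y-z_0}) + \tfrac{\omega_f(\abs{y-z_0})}{2}(\abs{z_0}^2 - d^2),\ \underline{u}(z_0) \bigr\}$, and then checks i)--iii) by hand, transporting a touching polynomial at $x_0$ to the maximizing point $x_0^*$ (shown to lie in $\Omega$) and using Minkowski's determinant inequality. Your Perron-envelope definition has a genuine gap at the very first step: you justify that the envelope is a viscosity subsolution by saying its usc regularization ``is in the family'' and by invoking Proposition \ref{conv uniform}. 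But membership in the family requires continuity on $\overline{\Omega}$, which is exactly what is not yet known (you only recover it later from iii)), and Proposition \ref{conv uniform} requires uniform convergence, which a pointwise supremum does not provide. What you actually need is the Perron lemma that the (usc-regularized) sup of a family of viscosity subsolutions is a subsolution; that lemma is neither in the paper nor proved in your proposal, and relying on it makes Lemma \ref{sub replace} partially circular with the proof of Theorem \ref{intro solve}, where the lemma is used precisely to tame the Perron sup.

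The second gap is in step iii): the competitor you actually write down, $\tilde u(z+\zeta) - (3d\,\osc_V f)\abs{\zeta} - d^2\omega_f(\cdots)$, is not a subsolution of $M_{\C}(\cdot)=f(x,\cdot)$. Subtracting a constant leaves $M_{\C}$ unchanged, and since $f$ is only nondecreasing (not strictly increasing) in $u$, the constant cannot absorb the error between $f(z,\cdot)$ and $f(z+\zeta,\cdot)$ --- take $f$ independent of $u$ to see the failure. The correct device is to add the quadratic term $\tfrac{\omega_f}{2}(\abs{z}^2 - d^2)$, whose contribution to $dd^c$ raises $M_{\C}^{1/n}$ by $\omega_f$ via Minkowski's inequality; this is exactly what the paper builds into its definition of $\tilde u$ and is where the $d^2\omega_f$ term comes from. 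You mention this mechanism only in passing in your last paragraph, and the boundary bookkeeping (gluing the translated competitor with $\underline{u}$ where $z+\zeta$ exits $\overline{\Omega}$, keeping it continuous, a subsolution, and $\leq g$ on $\partial\Omega$, using $\underline{u} \le \tilde u \le \overline{u}$) is flagged as an anticipated obstacle rather than carried out. The translation-plus-comparison scheme could probably be completed along these lines, but as written both the subsolution property of the envelope and the key competitor construction rest on unproved or incorrect claims, all of which the paper's explicit construction avoids.
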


\begin{proof}
The proof is similar to that of Jensen's approximation (see \S5.1 [CC]). However this lemma seems not standard and well-known, we shall include details for readers' convenience.

Without lose of generality, we shall assume $0 \in \Omega$ and $d = \diam{\Omega}$.  By taking $\sup\{u, \tilde{u}\}$, we may assume $u |_{\partial \Omega}=  g$ and $u \geq \tilde{u}$ in $\partial \Omega$.

Define $\tilde{u}$ as following:
\[
\tilde{u}(z_0) := \sup_{y \in \overline{\Omega}} \{ \max [ u(y) -\omega (\abs{y -z_0})  + \frac{\omega_f (\abs{y -z_0})}{2} (\abs{z_0}^2 - d^2)  , \;  \underline{u} (z_0) ] \}.
\]
Write $z_0^*$ for the point where the supreme  occur.  We shall show $\tilde{u}$ satisfies desired properties.

We prove iii) first and $u \in PSH (\Omega)$ follows from Choquet lemma. The key point is that (optimal) modulus of continuity function is sub-additive.  Fix $z_1, z_2 \in \overline{\Omega}$. By additivity,
\[
\abs{\omega (\abs{z_1 - y}) - \omega (\abs{z_2 -y}) } \leq \omega (\abs{z_2 - z_1}), \; \forall y \in \overline{\Omega}
\]
Similarly holds for $\omega_{f}$

Therefore,
\[\begin{split}
& \omega_{f} (\abs{z_1 -y})( \abs{z_1}^2 - d^2 ) - \omega_f(\abs{z_2- y}) (\abs{z_2}^2 - d^2) \\
 & =\omega_f(\abs{z_1 - y}) (\abs{z_1}^2 - \abs{z_2}^2) + (\omega_f (\abs{z_1-y}) -\omega_f (\abs{z_2- y}) ) (\abs{z_2}^2 - d^2) \\
& \geq - 3d ( \osc_{V} f ) \; \abs{z_1 - z_2} -  d^2\omega_f(\abs{z_1 -z_2}) 
\end{split}\]

Combine all these, we obtain
\[\begin{split}
\tilde{u} (z_1) - \tilde{u} (z_2) \geq  - (3d \osc_{V} f ) \abs{z_1 - z_2}- d^2  \omega (\abs{z_1 - z_2})  - \omega (\abs{z_1 - z_2})
\end{split}\]

Since $z_1, z_2$ are chosen arbitrarily
\[
\omega_{\tilde{u}} ( \abs{z_1 - z_2}) \leq (3d \osc_{V} f ) \abs{z_1 - z_2} + d^2  \omega_f (z_1 - z_2 ) + \omega (\abs{z_1 -z_2})
\]
This proves iii).

\smallskip

To prove i). Let $p\in T^{+} (n), h \in H(n)$ and $P = p + \Re(h)$ touches $\tilde{u}$ from above at $x_0 \in \Omega$. If $\tilde{u} (x_0) = \underline{u} (x_0)$, then 
\[
M_{\C} (P) \geq f (x_0, \underline{u} (x_0)),
\]
because $\underline{u}$ is a subsolution.

If $\tilde{u} (x_0) > \underline{u} (x_0)$, then the quadratic polynomial
\[
Q (z ) := P (z + x_0 - x_0^*) + \omega (\abs{x_0 - x_0^*}) - \frac{\omega_f(\abs{x_0-x_0^*})}{2} (\abs{z + x_0 - x_0^*}^2 -d^2)
\] 
touches $u$ at $x_0^*$. 

Claim $x_0^* \in \Omega$. Suppose otherwise $x_0^* \in \partial \Omega$ ,then
\[\begin{split}
 \tilde{u} (x_0) &= u (x_0^*) - \omega (\abs{x_0^* - x_0}) + \frac{\omega_f (\abs{x_0^* - x_0})}{2} (\abs{x_0}^2 - d^2) \\
& \leq \overline{u} (x_0^*) - \omega (\abs{x_0^* - x_0}) \leq  \overline{u} (x_0^* ) = \underline{u} (x_0^*).
\end{split}\]
Here we used the fact that $u$ is subharmonic and $u = \overline{u}$ on $\partial \Omega$. But this contradicts to the assumption that $\tilde{u} (x_0) > \underline{u} (x_0)$.

Now, recall $u$ is a subsolution, thus
\[
M_{\C} (Q)   \geq f (x_0^* , u (x_0^*) ) = f (x_0^* , u (x_0^*)) 
\]
Apply Minkowski inequality of determinant, we obtain 
\[\begin{split}
M_{\C}^{1/n} (P) & \geq f^{1/n} (x_0^*, u (x_0^*) ) + \omega (\abs{x_0^* -x_0})  \\
& \geq f^{1/n} (x_0^*, \tilde{u} (x_0) ) + \omega (\abs{x_0^* -x_0})  \geq f^{1/n} (x_0, \tilde{u} (x_0) )
\end{split}\]
Here we have used the fact
\[
\tilde{u} (x_0 ) =  u (x_0^* ) - \omega (\abs{x_0^* - x_0}) + \frac{\omega (\abs{x_0^* - x_0})}{2} (\abs{x_0}^2 - d^2) \leq u (x_0^*).
\]
Thus, we have shown $\tilde{u}$ is a subsolution.

\smallskip 
To see ii),  same as in the previous step,  if $z_0 \in \partial \Omega$, then for any $y \in \overline{\Omega}$
\[\begin{split}
&u (y) - \omega (\abs{ y- z_0}) + \frac{\omega (\abs{y - z_0})}{2} (\abs{z_0}^2 - d^2) \\
& \leq \overline{u} (y) - \omega (\abs{z_0 - y}) \leq \overline{u}(z_0) = \underline{u} (z_0).
\end{split}\]
Hence
\[
\tilde{u} |_{\partial \Omega} = \underline{u } |_{\partial \Omega} = g .
\]
That $\tilde{u} \geq u$ in $\Omega$ is obvious.

\smallskip

This completes the proof.
\end{proof}

\medskip

\textit{Proof of Theorem \ref{intro solve}.} Define
\[
\subsoln: = \{ v : M_{\C} (v) \geq f(x, v) \text{ in viscosity sense in } \Omega, \; v|_{\partial \Omega} \leq g  \}.
\]
and
\[
u = \sup \{v : v \in \subsoln\}.
\]

Consider the following sub-family of $\subsoln$
\[
\tilde{\subsoln}:= \{\tilde{v} : v \in \subsoln\}.
\]

By the Lemma \ref{sub replace}, $\tilde{\subsoln}$ is a equi-continuous subset of $\subsoln$ and
\[
u =  \sup \{\tilde{v} : \tilde{v} \in \tilde{\subsoln}\}.
\]  

By Arzelà--Ascoli, $u$ is uniform limit of a sequence of subsolution. Hence, by recalling Proposition \ref{conv uniform}, $u \in C (\Omega)$ is again a subsolution.

To finish the proof, we only left to show that $u$ is also a supersolution. Argue by contradiction, there exists a $p \in T^{+} (n), h \in H(n)$ such that $p + \Re(h)$ touches $u$ from below at $z_0 \in \Omega$, but $M_{\C} (p) > f (z, u(z_0))$. Then, taking a small ball $B_{r}$ of $z_0$, and define
\[
\psi := p  + \Re (h) - \frac{\epsilon}{2} (\abs{z-z_0^2} - r^2).
\]
For $r, \epsilon$ small enough, we have $\psi$ is Psh,
\[
M_{\C} (\psi)  >  f (z, u (z)), \quad \forall  z \in B_{r} (z_0).
\] 
and 
\[
\psi (z_0) > u (z_0), \quad \psi|_{\partial B_{r}} \leq u |_{\partial B_{r}}.
\]

Then define
\[
\hat{u}:= \begin{cases}
 \max \{\psi , u \} & x \in \overline{B_r} (z_0) \\
  u     &   x \in \Omega \setminus B_r (z_0).
\end{cases}
\]

It is easy to check that $\hat{u}$ is again a subsolution but $\hat{u} (z_0) > u (z_0)$. This contradicts the maximality of $u$.

The proof is then complete.
$\Box$

\medskip

Besides solvability, Lemma \ref{sub replace} and the proof of Theorem \ref{intro solve} yields 

\begin{cor}
\label{mod}
Let $\Omega$ be a strictly pseudoconvex domain and $\phi \in C(\overline{\Omega})$ and $\psi (u) \in C(\R)$ non-decreasing. Suppose $M_{\C} (\cdot) = \phi (x) \psi (u)$ admits an subsolution $\underline{u}$ with modulus of continuity $\omega_{\underline{u}}$. Then the modulus of continuity $\omega_u$ unique viscosity solution $u$ satisfies
\[
\omega_{u} \leq C (\max\{\omega_{\phi}, \omega_{\underline{u}}\})
\]
where $C$ depends on $n ,\diam{\Omega}, \norm{\phi}_{L^{\infty} (\Omega)}$ and the $L^{\infty}$ norm of $\psi$ on the interval $[ \min_{\Omega}\underline{u}, \max_{\Omega} \underline{u}]$.

In particular, if $\underline{u}$ and $\phi$ are $\alpha$-H\"older in $\overline{\Omega}$, then $u$ is $\alpha$-H\"older with 
\[
\norm{u}_{\alpha, \overline{\Omega}} \leq C \max\{\norm{\underline{u}}_{\alpha, \overline{\Omega}}, \norm{\phi}_{\alpha, \overline{\Omega}} \}
\]
\end{cor}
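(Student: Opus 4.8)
\textbf{Proof proposal for Corollary \ref{mod}.}

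The plan is to specialize the quantitative bound already packaged in Lemma \ref{sub replace} to the case $f(x,t) = \phi(x)\psi(t)$ and then propagate it through the Perron construction used in the proof of Theorem \ref{intro solve}. First I would set up the hypotheses of Theorem \ref{intro solve}: since $\Omega$ is strictly pseudoconvex, the harmonic function $\overline{u}$ with boundary value $g := \underline{u}|_{\partial\Omega}$ exists, and $\underline{u}$ itself serves as the required subsolution with $\underline{u}|_{\partial\Omega} = g$; hence there is a unique viscosity solution $u$. The point is that in the proof of Theorem \ref{intro solve}, $u$ is realized as the uniform limit (via Arzel\`a--Ascoli) of the equicontinuous family $\tilde{\mathcal{S}}$, every member of which has modulus of continuity controlled by the explicit estimate in part iii) of Lemma \ref{sub replace}. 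Therefore $\omega_u$ is bounded by that same expression, with $\underline{u}$ playing the role of the given subsolution and $\overline{u}$ the harmonic extension.

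Next I would unpack the estimate. In Lemma \ref{sub replace} iii) the bound reads
\[
\omega_{\tilde{u}}(t) \leq (3d\,\osc_V f)\,t + d^2\,\omega_f(t + \omega(t)),
\]
where $d = \diam{\Omega}$, $\omega = \max\{\omega_{\underline{u}}, \omega_{\overline{u}}\}$, $\omega_f$ is the modulus of continuity of $f^{1/n}$ on $V = \overline{\Omega}\times[-M,M]$ with $M = \max\{\|g\|_\infty + d^2, \|\underline{u}\|_\infty + d^2\}$. For $f(x,t) = \phi(x)\psi(t)$ one has $f^{1/n}(x,t) = \phi(x)^{1/n}\psi(t)^{1/n}$; since $t\mapsto \psi(t)$ is continuous on the compact interval $[\min_\Omega \underline{u}, \max_\Omega\underline{u}]$ — and in fact on $[-M,M]$ after noting $M$ is controlled by $\diam\Omega$ and $\|\underline{u}\|_\infty$ — and $\phi$ is continuous on $\overline{\Omega}$, the modulus $\omega_f$ is controlled in terms of $\omega_\phi$, $\|\phi\|_{L^\infty}$, and the $L^\infty$-norm of $\psi$ on that interval, together with the modulus of continuity of $\psi$ on it. The remaining input is that $\omega_{\overline{u}}$, the modulus of the harmonic extension of $\underline{u}|_{\partial\Omega}$ on the (smooth, strictly pseudoconvex, hence regular for the Laplacian) domain $\Omega$, is controlled by $\omega_{\underline{u}}$ via standard boundary-regularity/maximum-principle estimates for harmonic functions with continuous boundary data. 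Feeding these in, the right-hand side becomes a function $C(\max\{\omega_\phi,\omega_{\underline{u}}\})$ of the stated type, with $C$ depending only on $n$, $\diam\Omega$, $\|\phi\|_{L^\infty(\Omega)}$, and $\|\psi\|_{L^\infty([\min_\Omega\underline{u},\max_\Omega\underline{u}])}$.

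For the H\"older case I would simply take $\omega_{\underline{u}}(t) = \|\underline{u}\|_{\alpha}\,t^\alpha$ and $\omega_\phi(t) = \|\phi\|_\alpha\, t^\alpha$ and observe that the composite modulus $\omega_f(t+\omega(t))$ is then $\leq C\, t^\alpha$ with $C$ absorbing $\|\phi\|_\alpha$, $\|\phi\|_\infty$, and the relevant norm of $\psi$, since $t + \omega(t) \lesssim t^\alpha$ near $0$ (for $t$ bounded by $\diam\Omega$) and $s\mapsto s^\alpha$ is itself $\alpha$-subhomogeneous; the linear term $(3d\,\osc_V f)t$ is dominated by $t^\alpha$ on the bounded range of arguments. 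This yields $\|u\|_{\alpha,\overline{\Omega}} \leq C\max\{\|\underline{u}\|_{\alpha,\overline{\Omega}}, \|\phi\|_{\alpha,\overline{\Omega}}\}$.

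The main obstacle I expect is the bookkeeping needed to confirm that the constant genuinely depends only on the quantities advertised — in particular, that $\omega_{\overline{u}}$ really is controlled by $\omega_{\underline{u}}$ with a constant depending only on $\Omega$ (this is where strict pseudoconvexity, ensuring all boundary points are regular, is used) and that the passage from the modulus of $f$ to separate moduli of $\phi$ and $\psi$ does not smuggle in any dependence on $\underline{u}$ beyond its sup-norm and the length of its range. Everything else is a direct substitution into Lemma \ref{sub replace} combined with the limiting argument already carried out in the proof of Theorem \ref{intro solve}.
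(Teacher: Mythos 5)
Your skeleton is the same as the paper's: everything is reduced to the quantitative bound in part iii) of Lemma \ref{sub replace} together with the control $\omega_{\overline{u}}\leq C\,\omega_{\underline{u}}$ for the harmonic extension of $\underline{u}|_{\partial\Omega}$ on the $C^2$ (strictly pseudoconvex) domain. The only structural difference is that you propagate the bound through the Perron family $\tilde{\subsoln}$ and Arzel\`a--Ascoli, whereas the paper applies Lemma \ref{sub replace} directly to the solution $u$ and uses the comparison principle to conclude $u=\tilde{u}$; these are interchangeable.

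However, there is a genuine gap at precisely the point the paper flags with ``by tracking the proof of Lemma \ref{sub replace}'': the conclusion cannot be obtained by literal substitution into the \emph{statement} of Lemma \ref{sub replace} iii). There $\omega_f$ is the modulus of continuity of $f^{1/n}$ on $V=\overline{\Omega}\times[-M,M]$, jointly in $(x,t)$; for $f(x,t)=\phi(x)\psi(t)$ this is only bounded by $\|\psi\|_{\infty}^{1/n}\,\omega_{\phi^{1/n}}+\|\phi\|_{\infty}^{1/n}\,\omega_{\psi^{1/n}}$, i.e.\ it genuinely involves the modulus of continuity of $\psi$. You acknowledge this (``together with the modulus of continuity of $\psi$ on it'') and then assert a constant depending only on $n$, $\diam{\Omega}$, $\norm{\phi}_{L^\infty}$ and $\norm{\psi}_{L^\infty}$; the step that eliminates $\omega_\psi$ is missing, and without it the bound $\omega_u\leq C(\max\{\omega_\phi,\omega_{\underline{u}}\})$ does not follow (take $\psi$ continuous but with a modulus far worse than $\omega_\phi$ and $\omega_{\underline{u}}$). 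The missing observation is that in the proof of Lemma \ref{sub replace} the $t$-variable of $f$ is never handled by continuity at all: it enters only through monotonicity, via $\tilde{u}(x_0)\leq u(x_0^*)$ and $f(x,\cdot)$ non-decreasing. Hence one may rerun the construction of $\tilde{u}$ with $\omega_f$ replaced by the modulus of $x\mapsto f^{1/n}(x,t)$ uniform in $t$ over the relevant range, which for the product form is at most $\norm{\psi}_{L^\infty}^{1/n}\,\omega_{\phi^{1/n}}$ and contains no $\omega_\psi$; this is what ``tracking the proof'' buys. Relatedly, to get the sup-norm of $\psi$ on $[\min_\Omega\underline{u},\max_\Omega\underline{u}]$ (rather than on the larger interval $[-M,M]$ appearing in $V$) you need to note that all evaluations of $f$ occur at $t$-values squeezed between $\underline{u}$ and the harmonic extension $\overline{u}$, hence in that interval by the maximum principle; your remark that $M$ is controlled by $\diam{\Omega}$ and $\norm{\underline{u}}_\infty$ does not by itself give the stated dependence.
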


\begin{proof}
Since $\Omega$ is strictly pseudoconvex, in particular of $C^2$-boundary. By standard harmonic analysis, there exists a harmonic function $h$ such that $h |_{\partial \Omega} = \underline{u}$ and 
\[
\omega_h \leq C \omega_{\underline{u}}
\]
where $C$ only depends on $n , \diam{\Omega}$.

Apply Lemma \ref{sub replace} to the viscosity solution $u$, then $\tilde{u} \geq u$. By tracking the proof of Lemma \ref{sub replace}, the specific form $f (x, t) = \phi (x) \psi (u)$ allows one to conclude that $\omega_{\tilde{u}}$ has required modulus of continuity. But $u \geq \tilde{u}$ by comparison principle, hence $u = \tilde{u}$. And the corollary follows. 
\end{proof}

\section{Relations with Pluripotential Solutions}

In this section, we assume that $\Omega$ is a strictly bounded pseudoconvex domain. We use the following normalization of Lebesgue measure:
\[
d\lambda  = \frac{1}{n!} ( \sum_i \ci dz^i \wedge d\bar{z^i}).
\]
Hence 
\[
(dd^c u) = \det( 2 u_{\bar{k} j } ) \; d\lambda.
\]

\begin{prop}
\label{p to v}
Let $f \in C(\Omega)$ be non-negative and $u\in C(\Omega)$. Then $(dd^c u )= f(z) \; d \lambda$ in pluripotential-potential sense implies that $M_{\C} (u) = f(z)$ in viscosity sense.
\end{prop}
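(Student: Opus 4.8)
The plan is to show that a continuous function $u$ with $(dd^c u) = f\,d\lambda$ in the pluripotential sense is, first, plurisubharmonic, and then a viscosity subsolution and a viscosity supersolution of $M_{\C}(\cdot) = f$. Plurisubharmonicity is automatic: having a well-defined Monge–Amp\`ere measure in the Bedford–Taylor sense presupposes $u \in PSH(\Omega) \cap C(\Omega)$ (or we take this as part of the hypothesis, as the statement of Theorem \ref{intro eqv} does). So by Proposition \ref{test condition}(b) it suffices to test against arbitrary $\varphi \in C^2(\Omega)$.

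For the \emph{subsolution} property, suppose $\varphi \in C^2(\Omega)$ and $u - \varphi$ has a local maximum at $z_0$; since $u$ is psh, $dd^c\varphi(z_0) \geq 0$, and I must show $M_{\C}(\varphi)(z_0) \geq f(z_0)$. The idea is the standard comparison argument from pluripotential theory: if $M_{\C}(\varphi)(z_0) < f(z_0)$, then by continuity I can perturb $\varphi$ downward by a small multiple of $|z - z_0|^2$ and add a negative constant to get a smooth strict supersolution $\tilde\varphi$ with $M_{\C}(\tilde\varphi) < f$ on a small ball $B = B_r(z_0)$, with $\tilde\varphi \geq u$ on $\partial B$ but $\tilde\varphi(z_0) < u(z_0)$. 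Then on $B$ one compares the pluripotential measures: $(dd^c \tilde\varphi) = M_{\C}(\tilde\varphi)\,d\lambda < f\,d\lambda = (dd^c u)$, so by the comparison principle of Bedford–Taylor (the classical one, $\int_{\{u > \tilde\varphi\}}(dd^c u)^n \leq \int_{\{u>\tilde\varphi\}}(dd^c\tilde\varphi)^n$ when $u \leq \tilde\varphi$ on the boundary) one forces $u \leq \tilde\varphi$ on $B$, contradicting $\tilde\varphi(z_0) < u(z_0)$. For the \emph{supersolution} property, suppose $p + \Re(h)$ touches $u$ from below at $z_0$ with $p \in T^+(n)$; I must show $M_{\C}(p) \leq f(z_0)$. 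If instead $M_{\C}(p) > f(z_0)$, I perturb the quadratic polynomial \emph{upward} by a small $\frac{\epsilon}{2}(|z-z_0|^2 - r^2)$ to get a smooth strict subsolution $\psi$ on a small ball, psh with $M_{\C}(\psi) > f$, satisfying $\psi(z_0) = u(z_0)$, $\psi \leq u$ on $\partial B_r$; again the Bedford–Taylor comparison principle with the roles reversed gives $\psi \leq u$ in $B_r$ but forces the Monge–Amp\`ere masses to contradict, since $\int_{B_r}(dd^c\psi)^n > \int_{B_r} f\,d\lambda = \int_{B_r}(dd^c u)^n$ while the boundary inequality should give the reverse.

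The main obstacle is handling the inequalities between the pluripotential Monge–Amp\`ere measure $(dd^c u)$ and the smooth quantity $M_{\C}(\varphi)\,d\lambda$ cleanly on the overlap region, since one only knows $(dd^c u) = f\,d\lambda$ as measures, not pointwise, and the contact point $z_0$ may be a single point of measure zero; the trick is that the \emph{strict} inequality on the whole small ball, together with Bedford–Taylor's comparison principle applied on that ball, upgrades this to a genuine contradiction at $z_0$. A secondary technical point is verifying that the perturbed test functions are genuinely plurisubharmonic on the small ball, which follows because $dd^c\varphi(z_0) \geq 0$ (from $u$ being psh) or $M_{\C}(p) > 0$ forces $dd^c p > 0$, so the $O(\epsilon)$ perturbation keeps $dd^c$ of the right sign for $r,\epsilon$ small — exactly the observation recorded in the proof of Proposition \ref{test condition}. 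I would cite the Bedford–Taylor comparison principle (as in \cite{BT1}) as a black box rather than reproving it.
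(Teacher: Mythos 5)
Your overall strategy---perturb the test object on a small ball and invoke the Bedford--Taylor comparison principle---is the right one, and your subsolution half is a reasonable self-contained alternative to the paper, which simply cites \cite{EGZ2} for that direction and only proves the supersolution half in detail. But it is precisely in the supersolution half that your argument has a genuine gap. With the perturbation you describe, $\psi = p+\Re(h) + \frac{\epsilon}{2}(\abs{z-z_0}^2 - r^2)$, one has $\psi \le p+\Re(h) \le u$ on all of $B_r$; in particular $\psi(z_0)=u(z_0)-\frac{\epsilon}{2}r^2$, not $u(z_0)$ as you claim, and the comparison principle's conclusion ``$\psi\le u$ in $B_r$'' is already trivially true, so no contradiction arises. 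Your fallback contradiction through total masses rests on a false principle: from $\psi\le u$ on $\partial B_r$ (or even $\psi\le u$ in $B_r$) one cannot deduce $\int_{B_r}(dd^c\psi)^n \le \int_{B_r}(dd^c u)^n$; the monotonicity of Monge--Amp\`ere mass goes the other way (a smaller psh function with the same boundary values carries at least as much total mass), so $\int(dd^c\psi)^n>\int(dd^cu)^n$ is perfectly compatible with $\psi\le u$. The missing ingredient is the strict upward shift used in the paper: take $\psi = p+\Re(h) - \frac{\epsilon}{2}\abs{z-z_0}^2 + \delta$ with $0<\delta<\frac{\epsilon}{2}r^2$ (subtracting the quadratic is harmless, since $M_{\C}(p)>f(z_0)\ge 0$ forces $dd^c p>0$, so $\psi$ remains psh with $M_{\C}(\psi)>f$ near $z_0$ by continuity). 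Then $\psi<u$ on $\partial B_r$ while $\psi(z_0)=u(z_0)+\delta>u(z_0)$, and the pluripotential comparison principle, which would force $\psi\le u$ in $B_r$, gives the contradiction pointwise at $z_0$; no mass count is needed.

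A smaller but real problem affects your subsolution half as written: ``perturbing $\varphi$ downward by a small multiple of $\abs{z-z_0}^2$'', i.e. $\tilde\varphi=\varphi-\frac{\epsilon}{2}\abs{z-z_0}^2-\delta$, does not give $\tilde\varphi\ge u$ on $\partial B$ (you only know $\varphi\ge u$ there, and the touching may be of high order, even $\varphi\equiv u$ near $z_0$), and $dd^c\varphi(z_0)\ge 0$ does not survive subtracting $\epsilon I$, so plurisubharmonicity can fail, contrary to your technical remark. The correct perturbation is the mirror image of the one above: $\tilde\varphi=\varphi+\frac{\epsilon}{2}(\abs{z-z_0}^2-r^2)$, which equals $\varphi\ge u$ on $\partial B_r$, dips below $u$ at $z_0$, is psh near $z_0$ because $dd^c\varphi(z_0)\ge 0$ (which, as you correctly observe, follows from $u$ being psh), and still satisfies $M_{\C}(\tilde\varphi)<f$ on a small ball by continuity and the strict inequality at $z_0$. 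With that fix, your application of the Bedford--Taylor comparison, together with the observation that $f>0$ near $z_0$ so the integral inequality over the nonempty open set $\{\tilde\varphi<u\}$ becomes strict, is correct, and this half then runs exactly parallel to the paper's supersolution argument.
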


\begin{proof}
Let $u$ be a pluripotential solution of $M_{\C} (u) = f(z)$. To see that $u$ is a viscosity subsolution, one may carry out argument of proposition \cite{EGZ2} words by words. Now, it suffices to show that $u$ is a viscosity supersolution.

Let $p \in T^{+} (n), h \in H(n)$ and $p+  \Re(h) $ touches $u$ from below  at some $z_0 \in \Omega$. We need to verify that $M_{\C} ( p) \leq f(z_0)$. Without lose of generality, we assume $z_0 = 0$.

Suppose on the contrary, $M_{\C} (p) > f(0) \geq 0$, By continuity of $f$, there exists some $r_0$ and $\epsilon_0$ such that  
\[
dd^c ( p  - \frac{\epsilon_0}{2} \abs{z}^2) > 0;\quad M_{\C} (p - \frac{\epsilon_0}{2} \abs{z}^2 ) (z) > f(z), \;\forall z \in \overline{B_{r_0}} 
\]

Choose $0 <\delta <\frac{\epsilon_0 } {2} r_0^2 $, then 
 \[ \begin{split}
&  p + \Re(h )  -  \frac{\epsilon } {2} \abs{r_0}^2 + \delta <  u   , \; \text{on} \; \partial B_{r_0} \\ 
& p (0) + \Re(h) (0)- \frac{\epsilon } {2} \abs{z}^2_{z = 0} + \delta   > u (0) 
\end{split}
\]
But this contradicts the pluripotential comparison principle. Thus a pluripotential solution $u$ is also a viscosity supersolution. 
\end{proof}

\medskip

Theorem \ref{intro eqv} then follows easily from Proposition \ref{p to v}:

\medskip

\textit{Proof
of Theorem \ref{intro eqv}}: Let $u \in C(\overline{\Omega})$ satisfy $M_{\C} (\cdot) = f$ in viscosity sense. Solve the Dirichlet problem (\ref{intro DP}) with data $f, g = u|_{\partial \Omega}$ in pluripotential sense. Denote the unique solution by $\tilde{u}$. By Proposition\ref{p to v}. $\tilde{u}$ is also a viscosity solution of the Dirichlet problem. The viscosity uniqueness forces $\tilde{u} = u$. $\Box$

\medskip

Finally, we prove the corollary \ref{intro cor}

\medskip

\textit{Proof of Corollary \ref{intro cor}:} By Theorem \ref{intro eqv}, it suffices to solve the Dirichlet problem (\ref{intro DP}) in the viscosity sense. 

First consider the case when $g \in C^{\infty} (\overline{\Omega})$. Let $\rho \in C^2 (\overline{\Omega})$ be the exhaustion function of $\Omega$, then $A \rho + g$ is a subsolution with boundary value $g$ for $A$ sufficiently large. And the $C^2$-boundary allows the existence of harmonic functions for arbitrary given continuous boundary data. Therefore following Theorem (\ref{intro DP}), we obtain the existence and uniqueness.

The general case when $g$ is continuous follows from the above special case by applying a standard approximation procedure based on Theorem \ref{intro CP} and Proposition \ref{conv uniform}. 		$\Box$

\begin{rem}
We use here the
fact that strictly pseudoconvex domains have $C^2$ exhaustion functions.
\end{rem}

\section{An ABP-type of $L^{\infty}$-estimate}

Let $\Omega$ be a bounded pseudoconvex domain and $u$ solves
\[
\begin{cases}
( dd^c u )^n = f \in C(\overline{\Omega}) & \text{in } \Omega\\
 u = 0 & \text{on } \partial \Omega
\end{cases}.
\]

It was originally established by Cheng and Yau (see \cite{B}, p. 75) that
\[
\norm{u}_{L^{\infty} (\Omega)} \leq c_n \diam{\Omega} \norm{f}_{L^2 (\Omega)}^{1/n}.
\]
The Cheng-Yau argument was made precise by Cegrell and Persson \cite{CL}. Moreover, it is pointed out in Bedford \cite{BT1} that 
\[
\norm{u}_{L^{\infty} (\Omega)} \leq c_n \diam{\Omega} \norm{f \chi_{\{ u =\Gamma_u\} } }_{L^2}^{1/n}.
\]
when $\Omega$ is convex. 
Using the viscosity techniques developed in this paper, we can make the proof in \cite{BT1} more transparent:

\begin{lem}
\label{eq conv env}
Under the hypotheses of Theorem \ref{C0e ABP}, 
$\Gamma_u$ is a viscosity supersolution of the real Monge-Amp\'ere equation 
\[ 
M_{\R} ( \cdot ) = f^2 \chi_{u = \Gamma_u}. 
\]
and $\Gamma_u = 0$ on $\partial B_{2r}$. 
(see \cite{Gu} for the definition of viscosity solutions for
real Monge-Amp\`ere equations)
\end{lem}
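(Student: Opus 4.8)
The plan is to prove the statement in two independent pieces: first that $\Gamma_u = 0$ on $\partial B_{2r}$, and second that $\Gamma_u$ is a viscosity supersolution of $M_{\R}(\cdot) = f^2 \chi_{\{u = \Gamma_u\}}$ on $B_{2r}$. The boundary claim is essentially immediate: since $u \geq 0$ on $\partial\Omega$, the function $u^-$ (extended by zero to $B_{2r}$) vanishes outside $\Omega$, so the zero function is one of the competitors in the sup defining $\Gamma_u$, giving $\Gamma_u \geq 0$; on the other hand every competitor $v$ is convex on $B_{2r}$ with $v \leq -u^- \leq 0$ on $\Omega$, and a convex function on $B_{2r}$ that is $\leq 0$ on $\Omega \subset\subset B_{2r}$ must be $\leq 0$ on all of $B_{2r}$ (using that $\Omega$ is not contained in a hyperplane and convexity propagates the bound outward), so $\Gamma_u \leq 0$ everywhere and in particular $\Gamma_u = 0$ on $\partial B_{2r}$.

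For the supersolution claim, I would split $B_{2r}$ according to whether a point lies in the contact set. Away from the contact set — i.e. on the open set $\{u > \Gamma_u\} \cup (B_{2r}\setminus\overline\Omega)$ — the convex function $\Gamma_u$ is locally affine (this is the standard fact that a convex envelope is linear off the contact set, see Ch.~3 of \cite{CC}), so any quadratic polynomial touching $\Gamma_u$ from below at such a point must have nonpositive Hessian there, giving $\det_{\R} D^2 \leq 0 = f^2\chi_{\{u=\Gamma_u\}}$ at that point; thus the supersolution inequality holds trivially there. On the contact set $\{u = \Gamma_u\}$ the right-hand side is $f^2$, and here is where the substance lies.

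At a contact point $x_0$, suppose a quadratic polynomial (equivalently a $C^2$ function) $P$ touches $\Gamma_u$ from below at $x_0$. Since $\Gamma_u \leq u$ everywhere and $\Gamma_u(x_0) = u(x_0)$, the same $P$ touches $u$ from below at $x_0$. Now I want to pass from this real test function to a complex one. The key algebraic input is Lemma \ref{RC inequality}: the $J$-invariant part $\tilde P := \frac12(P + P\circ J)$ — more precisely the quadratic whose real Hessian is $\frac12(D^2 P + J^t D^2 P J)$ plus the appropriate lower-order terms — still touches $u$ from below at $x_0$ because $u$ is, locally near a contact point, itself a restriction of a psh function (indeed $u$ agrees with the convex, hence in particular psh-compatible, function $\Gamma_u$ there, or one argues directly that touching from below by $P$ forces touching from below by its complex symmetrization after the usual averaging-over-the-unitary-group trick that preserves the property of lying below a psh function). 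The resulting $\tilde P$ decomposes via Lemma \ref{decompose} as $p + \Re(h)$ with $p \in T(n)$; plurisubharmonicity of $u$ forces $dd^c p = dd^c u$-part to be $\geq 0$, so $p \in T^+(n)$, and the definition of viscosity supersolution for $M_{\C}(u) = f$ yields $M_{\C}(p) = \det_{\C}(dd^c p) \leq f(x_0)$. Finally Lemma \ref{RC inequality} gives $\det_{\R}(\frac12(D^2 P + J^t D^2 P J)) = (\det_{\C} dd^c P)^2 \leq f(x_0)^2$, and since $P$ is convex at a point where it touches a convex function from below, $\det_{\R} D^2 P \leq \det_{\R}(\frac12(D^2P + J^tD^2P J))$ by the Minkowski/concavity inequality for determinants on the positive cone (this is exactly the ``moreover'' clause of Lemma \ref{RC inequality}). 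Combining, $\det_{\R} D^2 P(x_0) \leq f(x_0)^2 = f^2\chi_{\{u=\Gamma_u\}}(x_0)$, which is the required viscosity supersolution inequality.

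The main obstacle I anticipate is the passage from a real test function touching $u$ (or $\Gamma_u$) from below to a psh/complex test function to which Definition \ref{defn viscosity} applies: one must check carefully that symmetrizing $P$ over the complex structure $J$ does not destroy the ``touches from below'' property, which is where one genuinely uses that $u$ is plurisubharmonic (the analogue for the upper contact set / subsolutions would be false). A secondary technical point is handling the case $\det_{\R}D^2 P(x_0) > 0$ versus $\det_{\R}D^2 P(x_0) \leq 0$ and making sure the argument is uniform — but by Proposition \ref{test condition}(b) one may restrict to $C^2$ test functions with $dd^c P \geq 0$, and the case $dd^c P(x_0)$ degenerate gives $\det_{\C} = 0 \leq f(x_0)$ directly, so no genuine difficulty remains there. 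I would also remark, following the paper's style, that the continuity of $f$ is used only to make $f^2\chi_{\{u=\Gamma_u\}}$ a legitimate (bounded, Borel) right-hand side for the real Monge–Ampère viscosity formalism of \cite{Gu}.
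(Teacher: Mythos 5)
Your overall skeleton (boundary value; off the contact set the envelope is affine along a segment; on the contact set pass the real test function to the complex supersolution property and use Lemma \ref{RC inequality}) is exactly the paper's, but several of your individual steps are wrong as written, and the one you call ``the key algebraic input'' is both unjustified and unnecessary. The $J$-symmetrization step fails: touching from below is \emph{not} preserved by averaging $P$ over the complex structure -- the sub-mean-value/averaging trick for psh functions works for test functions touching from \emph{above} (it is how one shows $dd^c q(z_0)\geq 0$ there), and your stated justification that ``$u$ agrees with $\Gamma_u$ near a contact point'' is false ($u=\Gamma_u$ only at the point). Likewise ``plurisubharmonicity of $u$ forces $p\in T^+(n)$'' for a $P$ touching from below is backwards (e.g. $P=-|z|^2$ touches any nonnegative psh $u$ with $u(0)=0$ from below), and ``$P$ is convex because it touches a convex function from below'' is also false for the same example. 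None of this is needed: in the real viscosity formulation of \cite{Gu} one tests supersolutions only against \emph{convex} test functions, and that convexity is what the paper uses. For convex $P$ one has $dd^cP\geq 0$, so by Lemma \ref{decompose} its $T(n)$-part $p$ lies in $T^+(n)$; since $P$ touches $\Gamma_u\leq u$ from below at a contact point it touches $u$ from below there, Definition \ref{defn viscosity} gives $M_{\C}(p)\leq f(z_0)$, and the ``moreover'' clause of Lemma \ref{RC inequality} (valid because $P$ is convex) yields $M_{\R}^{1/2n}(P)\leq M_{\C}^{1/n}(P)\leq f^{1/n}(z_0)$, i.e. $\det_{\R}D^2P(z_0)\leq f^2(z_0)$ directly, with no symmetrization.

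Two further steps would fail as written. Off the contact set you argue ``$\Gamma_u$ is locally affine, so a test polynomial touching from below has nonpositive Hessian, hence $\det_{\R}D^2P\leq 0$'': the envelope is only affine along a line segment $L$ through the point (p.~27 of \cite{CC}), not in a neighborhood, and in the even dimension $2n$ a nonpositive Hessian does \emph{not} have nonpositive determinant (a negative definite $D^2P$ has $\det_{\R}D^2P>0$). The correct argument again uses convexity of $P$: $P\leq\Gamma_u$ with equality at an interior point of $L$ forces the convex quadratic $P$ to be affine on $L$, so $D^2P\geq 0$ has a null direction and $\det_{\R}D^2P=0$. Finally, your boundary argument is wrong in both directions: the zero function is an admissible competitor only if $u^-\equiv 0$ (it must lie below $-u^-$, not below $0$), and a convex function that is $\leq 0$ on $\Omega\subset\subset B_{2r}$ certainly need not be $\leq 0$ on $B_{2r}$ (e.g. $K(|x|-\rho)$). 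The fact itself is immediate from the definition of $\Gamma_u$ as the envelope of $-u^-$ extended by zero over $B_{2r}$: every competitor is $\leq -u^-\leq 0$ on $B_{2r}$, and for any boundary point one exhibits an affine competitor (slope of order $\sup u^-/r$) lying below $-u^-$ on $B_{2r}$ and vanishing at that point, so $\Gamma_u=0$ on $\partial B_{2r}$.
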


\begin{proof}
The fact that $\Gamma_{u} = 0$ on $\partial B_{2r}$ is trivial.

Let $z_0 \in B_{2r} \setminus \{ u  =\Gamma_u\}$. It is shown in p. 27 of \cite{CC} that there exists an open line segment $L$ through $z_0$ on which $\Gamma_u$ is affine. Hence, if $P$ is a convex polynomial touching $\Gamma_u$ from below at $z_0$, then $P$ is affine on $L$, hence $\det D^2 P  = 0$.

Let $z_0 \in B_{2r} \in \{ u  =\Gamma_u\}$ and $P$ be a convex polynomial touching $\Gamma_u$ from below at $z_0$. Then $P$ touches $u$ from below at $z_0$. 
Since $u$ is a viscosity supersolution of $M_{\C} (\cdot) = f $ and we can apply
Lemma \ref{RC inequality}, we obtain
\[
M_{\R}^{1/2n} (P) \leq M_{\C}^{1/n} (P) \leq f^{1/n} (z_0).
\]
\end{proof}

\medskip

\textit{Proof of Theorem
\ref{C0e ABP}:} By the standard theory of real Monge-Amp\'ere equation (see Proposition
1.7.1 of \cite{Gu} for example), Lemma \ref{eq conv env} implies that  
\[
\abs{\nabla \Gamma_u  (U)} \leq \int_{U} f^2 \; dx 
\]
for all open sets $U \subset B_{2r}$. Thus, by Alexandrov's maximum principle (Theorem
1.4.1 of \cite{Gu}), we can conclude that
\begin{equation}
\sup_{B_{2r} } -\Gamma_u \leq C(n) r  \norm{ f \cdot \chi_{\{  u= \Gamma_u    \}}}^{1/n}_{L^{2} (B_{2r})}.
\end{equation}
Since $u^{-} \leq - \Gamma_u$ and $\{u = \Gamma_u\} \subset \Omega$ unless $u^- =0$ (see Theorem 3.6 in \cite{CC}), the desired estimate follows. 
Theorem \ref{C0e ABP} is proved. $\Box$

\begin{rem}
I) Kolodziej \cite{Kol2} has shown that for any $p > 1$,  $(dd^c u )^n = f \in L^p (\Omega)$ and $u \geq  0 $ on $\partial \Omega$ implies 
\[
\sup_{\Omega} u^{-} \leq C (p, n , \diam{\Omega} ) \norm{f}_{L^{p} (\Omega)}^{1/n}.  
\]

It is not clear whether $\norm{f \chi_{ \{u= \Gamma_u \} } }_{L^2}$ can be controlled by $\norm{f}_{L^p}$ with $1 < p < 2 $, or vice versa.

ii) If $\Omega$ is convex, then Lemma \ref{eq conv env} holds for the convex envelope of $u$ in $\Omega$.
\end{rem}

Along the same line of proof as in \cite{CL}, Theorem
\ref{C0e ABP} implies the following stability theorem:

\begin{cor}
Let $f_1, f_2 \in C (\overline{\Omega})$ be non-negative and let $u_1,u_2 \in C(\overline{\Omega})$ be pluripotential-potential solutions of $M_{\C} (\cdot) = f_1 $ (resp. $M_{\C} (\cdot) = f_2$). Then
\begin{equation}
\label{stability}
\norm{u_1 - u_2}_{L^{\infty} (\Omega) } \leq \norm{u_1 - u_2}_{L^{{\infty} } ( \partial \Omega) }  +  C(n) \diam{\Omega}  \norm{ (f_1 - f_2 ) \cdot \chi_{\{ u =\Gamma_u\}}}_{L^2(\Omega)}^{1/n}.
\end{equation}
\end{cor}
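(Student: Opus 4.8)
The plan is to reduce the two-solution stability estimate to the one-solution estimate in Theorem~\ref{C0e ABP} by a standard comparison trick. Set $w := u_1 - u_2 - \norm{u_1 - u_2}_{L^\infty(\partial\Omega)}$, so that $w \leq 0$ on $\partial\Omega$. The aim is to obtain a lower bound on $\inf_\Omega w$, hence an $L^\infty$ bound on $(u_1-u_2)^-$; the symmetric argument with the roles of $u_1,u_2$ reversed then controls $(u_1-u_2)^+$, and the two together give \eqref{stability}.

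The key step is to show that $w$ is, in an appropriate viscosity sense, a supersolution of $M_\C(\cdot) = (f_1 - f_2)^- $ (or more precisely that the difference of the two Monge--Amp\`ere measures can be bounded below by $f_1 - f_2$ in a way that survives passage to the convex envelope). Concretely, I would regularize $u_1$ and $u_2$ by Jensen's sup-convolutions as in Proposition~\ref{Jensen}, so that on the contact set the functions are twice differentiable a.e.; at a contact point where $\Gamma_w$ is $T_2$, the polynomial $\Gamma_w + u_2^\epsilon$ touches $u_1^\epsilon$ from below, so by the Minkowski (superadditivity) inequality for $\det_\C^{1/n}$ one gets
\[
M_\C^{1/n}\!\big(u_1^\epsilon\big)(x) \;\geq\; M_\C^{1/n}\!\big(u_2^\epsilon\big)(x) + M_\C^{1/n}\!\big(\Gamma_w\big)(x)
\]
at such points, and combining with the sub/supersolution inequalities for $u_1^\epsilon, u_2^\epsilon$ and letting $\epsilon \to 0$ yields $M_\C^{1/n}(\Gamma_w) \leq (f_1 - f_2)^{1/n}_+$ on the contact set in the viscosity sense. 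Passing through Lemma~\ref{RC inequality} as in Lemma~\ref{eq conv env}, $\Gamma_w$ is then a viscosity supersolution of the \emph{real} Monge--Amp\`ere equation $M_\R(\cdot) = (f_1-f_2)^2 \chi_{\{w = \Gamma_w\}}$ on a ball containing $\Omega$, with $\Gamma_w = 0$ on the outer boundary.

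From there I would invoke, exactly as in the proof of Theorem~\ref{C0e ABP}, the standard comparison/monotonicity property of the normal map for real Monge--Amp\`ere supersolutions (Proposition~1.7.1 of \cite{Gu}) to get $\abs{\nabla\Gamma_w(U)} \leq \int_U (f_1-f_2)^2\,dx$ for open $U$, followed by Alexandrov's maximum principle (Theorem~1.4.1 of \cite{Gu}) to conclude
\[
\sup_\Omega \big(-\Gamma_w\big) \;\leq\; C(n)\,\diam{\Omega}\,\norm{(f_1-f_2)\,\chi_{\{w=\Gamma_w\}}}_{L^2(\Omega)}^{1/n}.
\]
Since $w^- \leq -\Gamma_w$, this bounds $\sup_\Omega (u_2 - u_1) - \norm{u_1-u_2}_{L^\infty(\partial\Omega)}$ from above; interchanging $u_1$ and $u_2$ and noting that the contact sets differ only in labeling gives the stated inequality (absorbing the two contact-set indicator functions into the schematic $\chi_{\{u = \Gamma_u\}}$ as written).

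The main obstacle I anticipate is the first key step: making rigorous the claim that the difference of two viscosity (equivalently, pluripotential) solutions inherits a clean Monge--Amp\`ere supersolution inequality. Unlike in the linear or divergence-form setting, one cannot simply subtract the equations; the argument must go through the convex envelope and the contact set, where the sup-convolutions are $T_2$ almost everywhere, and one must be careful that the sign of $(f_1-f_2)$ is handled correctly (only the negative part of $f_1 - f_2$, or more precisely the relevant one-sided bound, propagates to $\Gamma_w$), and that the Minkowski inequality is applied in the right direction. This is exactly the delicate point that \cite{CL} treats by their $L^2$ argument; the payoff of the viscosity/contact-set approach here is that it localizes the estimate to $\{w = \Gamma_w\}$, which is what produces the sharper $\chi_{\{u=\Gamma_u\}}$ factor over the naive bound.
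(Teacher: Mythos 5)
Your argument is correct in outline, but it takes a genuinely different route from the paper's. The paper obtains the corollary ``along the same line of proof as in \cite{CL}'': one solves the auxiliary Dirichlet problem $M_{\C}(v)=|f_1-f_2|$ with zero boundary data (available by Corollary \ref{intro cor}), notes by superadditivity of $M_{\C}^{1/n}$ that $u_2+v-\norm{u_1-u_2}_{L^{\infty}(\partial\Omega)}$ is a subsolution of $M_{\C}(\cdot)=f_1$ dominated by $u_1$ on $\partial\Omega$, applies the comparison principle (Theorem \ref{intro CP}) to get $u_2-u_1\leq \norm{u_1-u_2}_{L^{\infty}(\partial\Omega)}-v$, and then applies Theorem \ref{C0e ABP} to $v$; the symmetric argument finishes. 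You instead run the convex-envelope/contact-set argument directly on the difference, in effect merging the proofs of Theorem \ref{intro CP} and Theorem \ref{C0e ABP}. This is viable and has a real payoff: the $L^2$ norm is localized to the contact set of $u_1-u_2$ itself rather than to that of an auxiliary solution, and no extra Dirichlet problem needs to be solved; the cost is that you must redo by hand the regularization bookkeeping that the \cite{CL}-style proof inherits for free from results already established.

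Three points need repair for your sketch to close. First, the sign of the boundary constant: take $w=u_1-u_2+\norm{u_1-u_2}_{L^{\infty}(\partial\Omega)}$, so that $w\geq 0$ on $\partial\Omega$; the envelope/ABP machinery (Theorem \ref{ABP}, Corollary \ref{ABP gradient}) is set up for nonnegative boundary data and bounds $\sup_{\Omega}w^{-}$, which here is exactly what you want. Second, the envelope must be that of the regularized difference $w_{\epsilon}=u_{1,\epsilon}-u_2^{\epsilon}+\norm{u_1-u_2}_{L^{\infty}(\partial\Omega)}$, with the supersolution $u_1$ inf-convolved and the subsolution $u_2$ sup-convolved (both covered by Proposition \ref{Jensen}; this assignment is also what makes $w_{\epsilon}$ semi-concave); with $\Gamma_{w}$ of the unregularized difference, as written, the touching statement fails. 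At an almost every contact point one then gets $f_1^{\epsilon}\geq M_{\C}(\Gamma_{w_{\epsilon}}+u_2^{\epsilon})$ from the supersolution property via Proposition \ref{classical}(b), and Minkowski plus $M_{\C}(u_2^{\epsilon})\geq f_{2,\epsilon}$ plus Lemma \ref{RC inequality} give $M_{\R}(\Gamma_{w_{\epsilon}})\leq ((f_1^{\epsilon}-f_{2,\epsilon})^{+})^{2}$ there, while $M_{\R}(\Gamma_{w_{\epsilon}})=0$ off the contact set as in Lemma \ref{eq conv env}. Third, Proposition \ref{Jensen}(e) gives the approximate equations only on compact subsets of $\Omega$, and the limit $\epsilon\to 0$ should not be taken in the equation (the contact-set indicator is discontinuous): instead, as in the proof of Theorem \ref{intro CP}, add a small $\delta>0$ to the boundary constant so that interior contact points, where $\Gamma_{w_{\epsilon}}=w_{\epsilon}<0$, are confined to a fixed compact subset; apply the normal-map and Alexandrov estimates of \cite{Gu} at each fixed $\epsilon$; and then let $\epsilon,\delta\to 0$, using the uniform convergence $\Gamma_{w_{\epsilon}}\to\Gamma_{w}$ together with the pointwise bound $\limsup_{\epsilon}\chi_{\{w_{\epsilon}=\Gamma_{w_{\epsilon}}\}}\leq \chi_{\{w=\Gamma_{w}\}}$. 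With these adjustments your argument goes through.
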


\bibliographystyle{alpha}
\bibliography{vscma}

\end{document}